\colorlet{lred}{red!40}
\colorlet{lblue}{blue!40}
\colorlet{lgreen}{green!40}
\definecolor{mixc}{cmyk}{0.5,0.5,0.5,0}
\colorlet{mixl}{mixc!30}
\newtheorem{theorem}{Theorem}
\newtheorem{definition}[theorem]{Definition}
\newtheorem{corollary}[theorem]{Corollary}
\newtheorem{lemma}[theorem]{Lemma}
\newtheorem{remark}[theorem]{Remark}
\newtheorem{example}[theorem]{Example}
\numberwithin{equation}{section}
\numberwithin{figure}{section}
\numberwithin{theorem}{section}
\newcommand{\imi}{\mathsf{i}}
\newcommand{\source}{h}
\newcommand{\hilbert}[1]{\mathcal{H}}
\newcommand{\C}{\mathbb C}
\newcommand{\R}{\mathbb R}
\newcommand{\N}{\mathbb N}
\newcommand{\sph}{\mathbb S}
\newcommand{\edot}{\,\cdot\,}
\newcommand{\Wo}{\mathbf W}
\newcommand{\Mo}{\mathbf M}
\newcommand{\Po}{\mathbf P}
\newcommand{\Ao}{\mathbf A}
\newcommand{\Do}{\mathbf D}
\newcommand{\RR}{R}
\newcommand{\ph}{\varphi}
\renewcommand{\Re}{\operatorname{Re}}
\renewcommand{\Im}{\operatorname{Im}}
\DeclareMathOperator*{\real}{Re}
\DeclareMathOperator*{\imag}{Im}
\newcommand{\rmd}{\mathrm d}
\newcommand{\ds}{\mathrm{d}S}
\newcommand{\eps}{\epsilon}
\newcommand{\coloneqq}{:=}
\newcommand{\tmin}{{t_0}}
\newcommand{\tmax}{{T}}
\newcommand{\om}{\omega}
\newcommand{\Omin}{\Omega_0}
\newcommand{\Omout}{\Omega}
\newcommand{\al}{\alpha}
\newcommand{\la}{\lambda}
\newcommand{\Sr}{\mathcal S}
\newcommand{\diam}{\operatorname{diam}}
\newcommand{\dist}{\operatorname{dist}}
\newcommand\abs[1]{\left\vert#1\right\vert}
\newcommand\sabs[1]{\lvert#1\rvert}
\newcommand\norm[1]{\left\Vert#1\right\Vert}
\newcommand\snorm[1]{\Vert#1\Vert}
\newcommand{\enorm}{\left\|\;\cdot\;\right\|}
\newcommand{\kl}[1]{\left(#1\right)}
\newcommand{\ekl}[1]{\left[#1\right]}
\newcommand\inner[2]{\left\langle#1,#2\right\rangle}
\newcommand\rest[2]{{#1}\vert_{#2}}
\newcommand\sfrac[2]{{#1}/{#2}}
\newcommand{\hnum}{{\tt h}}
\newcommand{\gnum}{{\tt g}}
\newcommand{\mnum}{{\tt m}}
\newcommand{\Wnum}{\boldsymbol{\tt{W}}}
\newcommand{\Pnum}{\boldsymbol{\tt{P}}}
\newcommand{\Cnum}{\tt{C}}
\newcommand{\Bnum}{\boldsymbol{\tt{B}}}
\newcommand{\Mnum}{\boldsymbol{\tt{M}}}
   \DeclarePairedDelimiterX\set[1]\{\}{%
      
      #1
   }
\newcommand{\sr}{\mathcal{S}}
\newcommand{\pd}{\partial}
 \newcommand{\green}{G}
\newcommand{\fourier}{\mathcal F}
\newcommand{\ifourier}{\mathcal{F}^{-1}}
 \newcommand{\x}{\mathbf{x}}
 \newcommand{\vx}{\mathbf{x}}
\title{Iterative Methods for Photoacoustic Tomography\\in Attenuating Acoustic Media}
\author{Markus Haltmeier\footnotemark[1]  \and Richard Kowar\footnotemark[1]  \and Linh V. Nguyen\footnotemark[2]}
\date{\small \footnotemark[1] Department of Mathematics, University of Innsbruck\\
Technikerstrasse 13, A-6020 Innsbruck, Austria\\
  { \tt \{Markus.Haltmeier,Richard.Kowar\}@uibk.ac.at}\\[1em]
\small \footnotemark[2]
Department of Mathematics, University of Idaho\\
875 Perimeter Dr, Moscow, ID 83844, {\tt lnguyen@uidaho.edu}}
\begin{document}
\maketitle


\begin{abstract}
The development of efficient and accurate reconstruction methods is an important aspect of
tomographic imaging. In this article, we address this issue for photoacoustic tomography.  To this aim, we
use models for acoustic wave propagation accounting for frequency dependent attenuation according to a wide class of attenuation laws that may include memory.
We formulate the inverse problem of photoacoustic tomography in attenuating medium as an ill-posed operator equation in a Hilbert space framework that is tackled by iterative regularization methods.
Our approach comes with a clear convergence analysis.
For that purpose we derive explicit expressions for the adjoint problem that can efficiently be implemented.
In contrast to time reversal, the employed adjoint wave equation is again damping and, thus has a stable solution. This stability property can be clearly seen in our numerical results.
Moreover, the presented numerical results  clearly demonstrate the efficiency
and accuracy of the derived iterative reconstruction algorithms in various situations
including the limited view case.

\medskip
\noindent
\textbf{Key words:}
Photoacoustic tomography, image reconstruction, acoustic attenuation, Landweber method,
regularization methods.

\medskip
\noindent
\textbf{AMS subject classification:}
44A12,
65R10,
92C55.
\end{abstract}

\linenumbers
\nolinenumbers

\section{Introduction}
\label{sec:intro}

Photoacoustic tomography (PAT) is an emerging coupled-physics  imaging modality that combines the high spatial resolution of ultrasound imaging with the high contrast of optical imaging (the basic principles are illustrated in Figure~\ref{fig:pat}).
Potential medical applications include imaging of tumors, visualization of vasculature or scanning of melanoma  \cite{beard2011biomedical,KruKisReiKruMil03,ntziachristos2005looking,wang2012photoacoustic}.
In this article we consider PAT using the following general model for acoustic wave propagation in attenuating media,
\begin{linenomath} \begin{equation}  \label{eq:wavealpha}
	\kl{ \Do_\al  +
	\frac{1}{c_0}\frac{\partial}{\partial t }  }^2  p_\al(x,t) -
    \Delta p_\al(x,t)    =  \delta'(t) h(x)
	\quad \text{ for }(x,t) \in \R^d \times \R \,.
\end{equation}\end{linenomath}
Here   $h \colon \R^d \to \R$ is the photoacoustic (PA) source,
$c_0 > 0$ is a constant, and $\Do_\al$
is the time convolution operator  associated with the inverse Fourier transform of the
complex valued attenuation function $\al \colon \R \to \C$.
Dissipative pressure wave equation models that can be cast in the form~{\eqref{eq:wavealpha}} can be found
in~\cite{elbau2016singular,hanyga2014dispersion,kinsler1999fundamentals,kowar2010integral,kowar2012photoacoustic,kowar2011causality,lariviere2006image,sushilov2004frequency,szabo1994time}. The particular form of $\al$  depends on the used acoustic attenuation model and various different models have  been proposed for PAT (see \cite{kowar2012photoacoustic} for an overview).

The inverse problem of PAT consists in recovering the source term $h$ from  observations of  $p_\al$ on an observation surface $\Gamma \subseteq \R^d$ outside its support and for times
$t \in (0,\tmax)$.  Taking attenuation into account is essential for  high resolution PAT since ignoring attenuation may significantly blur
the reconstructed image.

\begin{figure}[tbh!]
\centering
  \includegraphics[width=\textwidth]{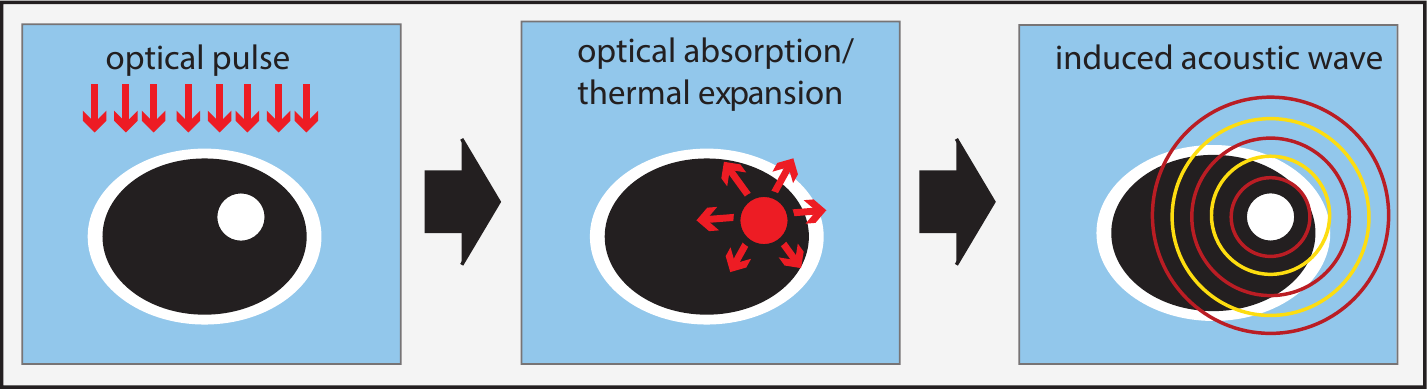}
\caption{\textsc{Basic principles of PAT.}\label{fig:pat}  A semitransparent sample is illuminated with a  short optical pulse. Due to optical absorption and subsequent thermal expansion an acoustic pressure wave is induced within  the sample. The pressure waves are measured outside of the sample and used to reconstruct an image of the interior.}
\end{figure}

\subsection{Our approach}

The inverse problem of PAT can be formulated as the
problem of estimating  $h$ from approximate data
$g^\delta \simeq \Wo_\alpha h$, where $\Wo_\alpha$ maps the PA source $h$ to the solution of \eqref{eq:wavealpha} restricted to $\Gamma \times (0,T)$. In this paper, we propose the use of regularization methods for stably inverting  the operator $\Wo_\al $. In particular, we apply the Landweber  method, which is a well established regularization method.
A main ingredient in the Landweber method is the numerical evaluation of the adjoint
$\Wo_\al^*$. For that purpose, we derive two explicit expressions for  the adjoint.
The first one takes the form  of an explicit formula for the  adjoint operator  and
will be used in our numerical implementation.  The second one involves the solution of an adjoint attenuated wave equation.
 We emphasize that our inversion approach is universal, in the sense, that it can be applied to a wide range of different attenuation models, a general measurement geometry as well as limited data problems.

\subsection{Comparison to previous and related work}

In the case of  vanishing attenuation  $\al=0$, the attenuated wave
equation~\eqref{eq:wavealpha} reduces to the standard  wave equation
\begin{linenomath} \begin{equation}  \label{eq:wave0}
	\frac{1}{c_0^2}\frac{\partial^2}{\partial t^2 }   p_0(x,t) -
	\Delta p_0(x,t)    =  \delta'(t) h(x)
	\quad \text{ for }(x,t) \in \R^d \times \R
\end{equation}\end{linenomath}
with sound speed $c_0$. Recovering  the source term $h$ in \eqref{eq:wave0} from boundary data is the standard  problem in PAT
and  various methods for its solution have been derived in the
recent years.  These approaches can be classified
in direct methods, time reversal and iterative  approaches. Direct methods are based on explicit solutions for the inverse problem that have been derived in the Fourier domain \cite{agranovsky2007uniqueness,haltmeier2007thermoacoustic,kunyansky2007series,xu2002exact}
as well as in the spatial domain~\cite{finch2007inversion,finch2004determining,haltmeier13inversion,haltmeier2014universal,haltmeier2015universal,
kunyansky2007explicit,kunyansky2015inversion,natterer2012photo,nguyen2009family,palamodov2012uniform,salman2014inversion,xu2005universal}.
In the time reversal technique, the wave equation \eqref{eq:wave0} is solved backwards in time where the
measured data are used as boundary values in the time reversed wave equation \cite{burgholzer2007exact,finch2004determining,Hristova2008,nguyen2016dissipative,treeby2010kwave}.
Discrete iterative approaches, on the other hand, are usually based on a discretization of the forward problem
together with numerical solution methods for solving the resulting system of linear equations
\cite{deanben2012accurate,paltauf2007experimental,paltauf2002iterative,rosenthal2013current,zhang2009effects,wang2014discrete,wang2012investigation}. Recently, iterative schemes in a Hilbert space settings have also  been introduced and studied; see \cite{arridge2016adjoint,belhachmi2016direct,haltmeier2016iterative}.
In this paper we generalize the iterative Hilbert space approach to attenuating
media.

The case of  non-vanishing attenuation is much less investigated and
existing methods are very different from our approach. One
class of reconstruction  methods  uses  the following two-stage procedure:
In a first step, by solving an ill-posed integral equation the (idealized) un-attenuated pressure data $p_0(z, \edot)$ are  estimated from the attenuated data $p_\al(z, \edot)$. In the second step, the standard  PAT problem is solved. Such a two step method  has been proposed and implemented for the power law in~\cite{lariviere2005image,lariviere2006image},
and later been used in~\cite{ammari2012photoacoustic,kowar2012photoacoustic}
for various attenuation laws. Compared to two stage approaches, in the  single step approach it is easier to include prior information available  in the image domain, such
as positivity  of the PAT source (compare Section~{\ref{sec:landweber}}).
Furthermore, in the limited data case, where the measurement  surface does not fully enclose the PA source,
 the  second step in the two-stage approach is again a
non-standard problem for PAT, for which iterative  methods
can be applied. In such a situation  it seems reasonable to directly
apply iterative methods  to the attenuated data, as considered
in the present paper.

A different class of algorithms extends the time reversal technique to the
attenuated case (see
\cite{acosta2017thermoacoustic,ammari2011time,burgholzer2007compensation,homan2013multi,kalimeris2013photoacoustic,
kowar2014time,palacios2016reconstruction,treeby2010photoacoustic}).
Note that the time reversal of the attenuated wave equation yields a noise amplifying
equation. Therefore regularization methods have to be incorporated in
its numerical solution.
Opposed to the time reversal, the adjoint wave equation used in our approach is again damping and no regularization is required for its stable solution.
This yields a clear convergence analysis for our method by using standard
regularization theory~\cite{engl1996regularization,kaltenbacher2008iterative,scherzer2009variational}.
We are not aware of similar existing results for  PAT in attenuating acoustic media.
 The approaches which are closest to our work seem~\cite{huang2013full,javaherian2017multi}.
In \cite{huang2013full} discrete iterative methods are considered,
where  the problem is first discretized and the adjoint is computed from
the discretized problem. Further, both works \cite{huang2013full,javaherian2017multi} use attenuation models based on the
fractional Laplacian  (see \cite{chen2004fractional,treeby2010modeling}) which yields an equation that is non-local in space.
It is not obvious how to extend these approaches to model~\eqref{eq:wavealpha} which can also include memory.

\subsection{Notation}

For $k \in \N$, we  write  $\sr(\R^{k+1})$ for the  Schwartz space  of rapidly decreasing functions $f \colon \R^{k+1}  \to \C$, and  $\sr'(\R^{k+1})$ for its dual, the space of tempered distributions.
Further we write $\fourier_t$  for the Fourier transform
in the temporal variable, defined by $(\fourier_t f)(x,\om) = \int_{\R} f(t) e^{\imi \om t} f(x,t)\rmd t$
for  $f\in \sr(\R^{k+1})$ and extended
by duality to tempered distributions. A tempered distribution
 in $\sr'(\R^{k+1})$ will be called causal (in the last component)
 if  it vanishes  for $t<0$. Finally, for $\al \in \sr(\R)$ we denote
 by $\Do_\al$ the time convolution operator with
 kernel $\ifourier_t(\al)$.

\subsection{Outline}

In Section~\ref{sec:pat} we formulate the forward operator
of the PAT in attenuating acoustic media in a Hilbert space framework.
We show that it is  continuous between $L^2$-spaces and
give an explicit expression for its solution.  We further derive
two expressions for the adjoint operator. In
Section~\ref{sec:inverse} we solve the corresponding inverse problem using the
Landweber regularization, present convergence results, and
give details for its actual implementation. Numerical results  are presented in Section~\ref{sec:num}, and a conclusion  is given in
Section~\ref{sec:conclusion}. Finally, in the appendix we present details for the wave equation formulation of the
adjoint operator.

\section{PAT in attenuating acoustic media}
\label{sec:pat}

Throughout this paper we assume that  $\alpha \colon \R\to\C$
is a weakly causal attenuation function, defined as follows.

\begin{definition}[Weakly causal attenuation function]
A\label{def:A} function $\alpha \colon \R\to\C$ is called
weakly causal attenuation function, if the following assertions
hold true:
\begin{enumerate}[label=(A\arabic*)]
\item\label{def:A1} $\real(\al)$ is even and $\imag(\al)$ is odd;
\item\label{def:A2} $\om \mapsto\real(\al(\om))$ is monotonically increasing for positive $\om$;
\item\label{def:A3} $\ifourier_t(\al)(t)$ vanishes for $t<0$.
\end{enumerate}
\end{definition}

Note that~\ref{def:A1}  implies that the inverse Fourier transforms of $\alpha$ and  $e^{-\alpha(\omega)\,\sabs{\x}}$ are real valued. The second condition reflects increasing attenuation with increasing  frequency. It is not essential and may be replaced by a similar property.
The condition \ref{def:A3} implies causality of the Greens function $\green_\alpha$ (i.e. $\green_\alpha (\edot,t) = 0 $ for $t<0$) and further
is equivalent to the Kramers-Kr{\"o}nig relations  (see {\eqref{eq:kk1}}, {\eqref{eq:kk2}} below). Examples for weakly causal  attenuation functions are given in
Subsection~\ref{sec:examples}.

\subsection{Attenuated wave equations}

We describe  acoustic waves in attenuation  media by the
integro-differential equation
\begin{linenomath} \begin{equation}  \label{eq:w}
\left\{     \begin{aligned}
    & \kl{ \Do_\al  +
	\frac{1}{c_0}
	\frac{\partial}{\partial t }  }^2  p_\al(\x,t)
	- \Delta p_\al(\x,t)
	=  s(\x,t)
	&& \text{ for } (\x,t) \in \R^{d+1} \,,
\\
&\quad p_\al(\edot,t) = 0 && \text{ for } t < 0 \,.
\end{aligned}
\right. \end{equation}\end{linenomath}
Here $s$ is  a source term and $\alpha \colon \R\to\C$ a weakly causal attenuation function. For any causal $s \in \sr'(\R^{d+1})$,  the attenuated wave equation \eqref{eq:w} has a causal solution $p_\al \in \sr'(\R^{d+1})$.
In particular, this implies the existence of
causal Greens function that takes the form (see~\cite{kowar2012photoacoustic})
\begin{linenomath} \begin{equation} \label{eq:green}
\green_\alpha (\vx,t)
= \frac{K_\al\left(\vx,t-\frac{\abs{\vx}}{c_0}\right)}{4\pi \abs{\vx}}
\quad \text{ with } \quad
K_\al(\x , t )
\coloneqq
\ifourier_t \kl{ e^{-\abs{\x} \alpha} }(t)\,.
\end{equation}\end{linenomath}
The Greens function represents  a spherical wave in attenuating acoustic
media  that originates at location $\x=0$ and time $t=0$. It
satisfies~\eqref{eq:wavealpha} with right hand side  $s(\x,t)=-\delta(\x)\delta(t)$.
If $K_\al(\x, \edot - \tfrac{\abs{x}}{c_0})$ is causal for every $\x$,  then  the dissipative  Green function $\green_\al$, defined in~\eqref{eq:green},  has a finite wave front speed $\leq c_0$.
Attenuation laws with finite wave front speed are
called strongly causal in~\cite{kowar2012photoacoustic}.

Throughout we refer to the convolution of the source $s$ with $\green_\alpha$  as
\emph{the causal} solution of~\eqref{eq:w}. For the model of~\cite{kowar2011causality} uniqueness is
shown in  \cite{kowar2010integral}.  Note that the  proof of \cite{kowar2010integral} can be generalized  to any
weakly causal attenuation law. This implies uniqueness of a solution of \eqref{eq:w}.

\begin{definition}
Let $\al \colon \R \to \C$ be a weakly causal attenuation function.
Then, for any  $r \in \R$ we
define $m_\al (\edot, r) \in \Sr'(\R)$ by
\begin{linenomath} \begin{equation}  \label{eq:Malpha}
\forall \om \in \R \colon \quad \fourier_t (m_\al  (\edot,r)  )(\om) \coloneqq    \frac{\om}{ \om/c_0 + \imi \al(\om)}
\, e^{\imi (\om / c_0  + \imi \al(\om)) \abs{r}}  \,.
\end{equation}\end{linenomath}
\end{definition}

The following result derived in~\cite{kowar2010integral,kowar2012photoacoustic}
will be frequently used in this paper.

\begin{lemma}[Relation between attenuated and un-attenuated pressure]
\label{lem:Malpha}
Let $\al$ be a non-vanishing weakly causal attenuation function.
Then $m_\al$ is $C^\infty$ on $\R^2\setminus \set{(0,0)}$.
Moreover,
 \begin{linenomath} \begin{equation}  \label{eq:Mpalpha}
\forall (x,t) \in \R^d \times (0, \infty) \colon \quad
p_\al(\x ,t)    =
\int_0^t  m_\al(t,r) p_0(\x,r) \rmd r \,,
\end{equation}\end{linenomath}
where $p_\al$ and $p_0$ denote the causal solutions
of \eqref{eq:wavealpha} and \eqref{eq:wave0}, respectively.
\end{lemma}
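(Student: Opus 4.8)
The plan is to pass to the Fourier domain in the time variable, where every object becomes explicit. Write $\kappa_\al(\om) \coloneqq \om/c_0 + \imi\al(\om)$, so that \eqref{eq:Malpha} reads $\fourier_t(m_\al(\edot,r))(\om) = \tfrac{\om}{\kappa_\al(\om)}\, e^{\imi\kappa_\al(\om)\abs r}$, and note from \eqref{eq:green} that $\fourier_t(\green_\al(\x,\edot))(\om) = e^{\imi\kappa_\al(\om)\abs\x}/(4\pi\abs\x)$, while for $\al=0$ one has $\fourier_t(\green_0(\x,\edot))(\om) = e^{\imi\om\abs\x/c_0}/(4\pi\abs\x)$; in particular $\fourier_t(\green_\al(\x,\edot)) = e^{-\al(\om)\abs\x}\,\fourier_t(\green_0(\x,\edot))$. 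Since the causal solutions of \eqref{eq:wavealpha} and \eqref{eq:wave0} are obtained by convolving the source $\delta'(t)h(\x)$ with $\green_\al$ resp.\ $\green_0$, applying $\fourier_t$ turns $\delta'$ into a factor $\propto\om$ and leaves a spatial average of these Green's functions against $h$. The whole argument is then to re-express that average for $p_\al$ through the one for $p_0$, the discrepancy being exactly the factor $e^{-\al(\om)\abs{\x-y}}$, which is what $m_\al$ will encode.

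For the smoothness claim I would argue directly from \eqref{eq:Malpha}. For $r\ne 0$ the factor $e^{\imi\kappa_\al(\om)\abs r} = e^{\imi\om\abs r/c_0}\,e^{-\abs r\,\real\al(\om)}$ decays in $\om$ faster than any power, locally uniformly in $r$ on compact subsets of $\{r\ne0\}$ (here \ref{def:A2} together with $\al$ non-vanishing is used, which forces $\real\al(\om)>0$ for $\om\ne0$), while $\om\mapsto\om/\kappa_\al(\om)$ is smooth with at most polynomial growth (the apparent pole at $\om=0$ is removable by \ref{def:A1}); hence $\ifourier_t$ of this symbol is $C^\infty$ jointly in $(t,r)$ there. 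On the remaining line $r=0$ one has $\fourier_t(m_\al(\edot,0))(\om)=\om/\kappa_\al(\om)$, which does not decay as $\abs\om\to\infty$, so $m_\al(\edot,0)$ is a multiple of $\delta$ plus a smoother remainder and is $C^\infty$ away from $t=0$; a non-stationary phase estimate in $\om$ — whose phase is non-degenerate as soon as $(t,r)\ne(0,0)$ — upgrades this to joint smoothness on $\R^2\setminus\{(0,0)\}$, and the single point $(0,0)$ genuinely cannot be added.

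For the integral identity, $\fourier_t(p_\al(\x,\edot))(\om)$ equals, up to the factor $\propto\om$, the integral $\int_{\R^d} e^{-\al(\om)\abs{\x-y}}\,\tfrac{e^{\imi\om\abs{\x-y}/c_0}}{4\pi\abs{\x-y}}\,h(y)\,\rmd y$, and the same integral without the attenuation factor represents $\fourier_t(p_0(\x,\edot))(\om)$. Writing the $y$-integral in polar coordinates about $\x$ and substituting the radius $\rho=\abs{\x-y}$ — which is linked through the $\delta$ in $\green_0$ to the time $r$ at which $p_0(\x,r)$ is evaluated — one rewrites $\fourier_t(p_0(\x,\edot))$ as a one-dimensional transform of a spherical-mean profile of $h$, and re-inserting $e^{-\al(\om)\rho}$ amounts to replacing $e^{\imi\om\rho/c_0}$ by $e^{\imi\kappa_\al(\om)\rho}$. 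Carrying this through yields
\[
\fourier_t(p_\al(\x,\edot))(\om) = \frac{\om}{\kappa_\al(\om)}\int_0^\infty e^{\imi\kappa_\al(\om)\,r}\,p_0(\x,r)\,\rmd r = \int_0^\infty \fourier_t(m_\al(\edot,r))(\om)\,p_0(\x,r)\,\rmd r ,
\]
the tail integral converging because $\imag\kappa_\al(\om)=\real\al(\om)>0$ for $\om\ne0$. Finally, for $r>0$ the symbol $\fourier_t(m_\al(\edot,r))(\om)$ is the Fourier transform of a function of $t$ supported in $\{t\ge r\}$; interchanging the $\om$-integral with the $r$-integral identifies the last display with $\fourier_t\bigl(t\mapsto\int_0^t m_\al(t,r)\,p_0(\x,r)\,\rmd r\bigr)(\om)$, and inverting $\fourier_t$ gives \eqref{eq:Mpalpha}. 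An alternative, perhaps cleaner route avoids the polar-coordinate computation: from \eqref{eq:Malpha} one checks the transport identity $\bigl(\Do_\al + \tfrac1{c_0}\partial_t + \partial_r\bigr)m_\al = 0$, then verifies that $q(\x,t)\coloneqq\int_0^t m_\al(t,r)\,p_0(\x,r)\,\rmd r$ is causal and, using this identity, integration by parts in $r$, and the fact that $p_0$ solves \eqref{eq:wave0}, solves \eqref{eq:wavealpha}; uniqueness of the causal solution of \eqref{eq:w} noted above then forces $q=p_\al$.

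The delicate point in either route is the passage from $\int_0^\infty$ to $\int_0^t$, i.e.\ the finite-propagation statement that $m_\al(t,\edot)$ is supported in $\{r\le t\}$. For a merely \emph{weakly} causal $\al$ this is exactly where \ref{def:A3} / the Kramers--Kr{\"o}nig relations enter: one must control the holomorphic extension of $\om\mapsto\tfrac{\om}{\kappa_\al(\om)}\,e^{\imi\kappa_\al(\om)r}$ to the upper half-plane — in particular the absence of zeros of $\kappa_\al$ there — so that its inverse Fourier transform in $t$ has the claimed support. The remaining work is distributional bookkeeping: the meaning of the $\delta'(t)$ source, the boundary terms at $r=0$ and $r=t$ in the integration-by-parts route (it is the $r=t$ term that reproduces the singular right-hand side), and the Fubini interchanges, all of which are routine once the support property is secured.
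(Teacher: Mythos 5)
The paper offers no proof of its own here: it simply cites \cite[Theorem~1 and Lemma~1]{kowar2010integral}, and your Fourier-domain reconstruction (relating $\fourier_t(\green_\al(\x,\edot))$ to $\fourier_t(\green_0(\x,\edot))$ through the factor $e^{-\al(\om)\abs{\x-y}}$ and converting it into the one-dimensional kernel $m_\al$) is the mechanism behind the cited result, so the overall route is the right one. But two of your steps do not follow from the stated hypotheses. For the smoothness claim you assert that \ref{def:A2} plus $\al\not\equiv 0$ forces $\real\al(\om)>0$ for $\om\neq 0$ and that $e^{-\abs{r}\real\al(\om)}$ then decays faster than any power. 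Neither implication holds: $\al(\om)=b_0(-\imi\om)$ is non-vanishing and weakly causal with $\real\al\equiv 0$, and, more importantly, for the NSW law \eqref{eq:nsw} --- the model the paper actually uses in its numerics --- $\real\al(\om)$ tends to a finite constant as $\abs{\om}\to\infty$, so the symbol $\tfrac{\om}{\om/c_0+\imi\al(\om)}\,e^{\imi(\om/c_0+\imi\al(\om))\abs{r}}$ does not decay at all in $\om$; its inverse Fourier transform then contains an exponentially damped $\delta$-type ridge along the wavefront rather than a smooth function. Rapid decay, hence joint smoothness, needs $\real\al(\om)\to\infty$ sufficiently fast (true for the power law and the KSB model, false for NSW), so your argument does not deliver the $C^\infty$ statement from \ref{def:A1}--\ref{def:A3} alone; the stronger hypotheses implicit in the cited source are genuinely needed.

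The second gap is the one you yourself flag as delicate, the support property $m_\al(t,r)=0$ for $r>t$ that turns $\int_0^\infty$ into $\int_0^t$: you leave it as ``control the holomorphic extension to the upper half-plane via \ref{def:A3}'', but weak causality is not enough for this. Vanishing of $m_\al(\edot,r)$ for $t<r$ is a finite-wavefront-speed statement, i.e.\ essentially \emph{strong} causality in the paper's terminology; for the power law \eqref{eq:powlaw} with $\gamma\in(1,2]$, which is weakly but not strongly causal, $e^{-\al(\om)r}$ is not polynomially bounded in the upper half-plane and the support property fails --- the paper concedes as much when it truncates $m_\al(t,r)$ for $r>t$ in its numerical implementation of the power law. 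So the step you defer cannot be closed by the mechanism you propose; it must either be proved model-by-model (as in the cited source) or the upper limit read as $\infty$ for merely weakly causal laws. A minor further point: your displayed identity, like \eqref{eq:Malpha} itself, is dimensionally consistent only after normalizing $c_0=1$ (or reading $r$ as $c_0$ times the time variable); that mirrors the paper's convention and is not a substantive flaw, but it should be stated.
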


\begin{proof}
See \cite[Theorem~1 and Lemma~1]{kowar2010integral}.
\end{proof}

\subsection{Examples for causal attenuation laws}
\label{sec:examples}

In this subsection, we give  particular examples for
causal attenuation laws that we use in this paper:  the power law
(see \cite{szabo1955causal,szabo1994time,waters2000timedomain}),
the model of Kowar, Scherzer, Bonnefond (see~\cite{kowar2012photoacoustic,kowar2011causality})
and the  model of Nachman, Smith and Waag (see~\cite{nachman1990equation}).

\begin{remark}[Kramers-Kronig relations]
A central property that should be satisfied by~\eqref{eq:wavealpha}
is causality of the corresponding Greens function $G_\al$.
Causality of $G_\al$ is equivalent to assumption \ref{def:A3}, the
causality of $\ifourier_t (\alpha)$ (see \cite{kowar2012photoacoustic}).
Widely use criteria for the causality of $\ifourier_t (\al)$
are the  Kramers-Kronig relations (see~\cite{kramers1927diffusion,kronig1926theory})
\begin{linenomath}\begin{align}        \label{eq:kk1}
	\Re[ \al(\om) ] &=
	\phantom{-} \frac{1}{\pi} \, \mathrm{P.V.}\! \int_\R
	\frac{\Im[ \al(\om') ]}{\om'-\om} d\om' \,,\\
        \label{eq:kk2}
	\Im[ \al(\om) ] &=
	- \frac{1}{\pi} \, \mathrm{P.V.}\!\int_\R
	\frac{\Re[ \al(\om') ]}{\om'-\om}d\om' \,.	
\end{align}\end{linenomath} In fact, according to Titchmarsh's theorem \cite[Theorem~95]{titchmarsh1986introduction}
for square integrable $\al$,  the causality of $\ifourier_t (\al)$ is equivalent to \eqref{eq:kk1} as well as to~\eqref{eq:kk2}. In such a situation, if the
imaginary part of the weakly causal attenuation function is known, then its real part
is uniquely determined by~\eqref{eq:kk1}.

Typical acoustic attenuation laws, however, are not square integrable
(see the examples below). In this case, the Kramers-Kronig
relations cannot be applied directly. Nevertheless, the method of
subtractions  allows extension to attenuation functions with $\al (\om) = \mathcal{O}(\omega^n)$ as $\om \to \infty$ with $n\in \N$
(see~\cite[Section~1.7]{nussenzveig1972causality}). In such a situation,
given the imaginary part $\Im[ \al]$, the Kramers-Kronig relation \eqref{eq:kk1}
determines the real part $\Re[ \al ]$ up to $n+1 $ additive constants.
As a concrete example, consider the case
where  $\al (\om) = \mathcal{O}(\omega)$. Then the Kramers-Kronig
relations yield
\begin{linenomath}\begin{align}        \label{eq:kk1a}
	\Re[ \al(\om) ] &=
    \Re[ \al(\om_0) ]
	+ \frac{\om-\om_0}{\pi} \; \mathrm{P.V.}\! \int_\R
	\frac{\Im[ \al(\om') - \al(\om_0)]}{\om'-\om_0}
\, \frac{d\om'}{\om-\om'} \,,\\
        \label{eq:kk2a}
	\Im[ \al(\om) ]
     &=
	\Im[ \al(\om_0) ] - \frac{\om-\om_0}{\pi} \;
\mathrm{P.V.}\!\int_\R
	\frac{\Re[\al(\om') - \al(\om_0)]}{\om'-\om_0}
\, \frac{d\om'}{\om-\om'} \,.	
\end{align}\end{linenomath} In particular,  the imaginary part of the attenuation function
determines its real part provided that $\Im[ \al(\om_0) ]$, for some fixed $\om_0$,
is given.
For the general case $\al (\om) = \mathcal{O}(\omega^n)$
see~\cite{nussenzveig1972causality}.
\end{remark}

In the following $\alpha_0$, $c_0$, $c_\infty$, $\tau_1$
and $\gamma$ denote positive constants.

\begin{example}[Power law]
In the  power law model,  the complex attenuation function takes the form
\begin{linenomath} \begin{equation} \label{eq:powlaw}
   \alpha \colon \R \to \C \colon
   \omega
   \mapsto
   a_0\,(-\imi \,\omega)^\gamma + b_0 \,(-\imi\,\omega)\,.
\end{equation}\end{linenomath}
Here $(-\imi \,\omega)^\gamma \coloneqq \abs{\omega}^\gamma \exp(- \imi  \pi \gamma \operatorname{sign}(\omega)/2)$
and $a_0, b_0$ are arbitrary positive constants.
This equation has been considered, for example, in~\cite{szabo1955causal,szabo1994time,waters2000timedomain}.
For tissue, the  exponent $\gamma$ in \eqref{eq:powlaw}
is in the range $(1,2]$. Note that for  positive
$\gamma$ that is not an integer the power law model is weakly causal.
In~\cite{kowar2012photoacoustic} it has been shown that the dissipative waves
modeled by \eqref{eq:powlaw} are strongly causal only if $\gamma\in (0,1)$.
\end{example}

\begin{example}[Model of Kowar,  Scherzer and Bonnefond]
The model proposed by Kowar,  Scherzer and Bonnefond (KSB model)
\cite{kowar2011causality} reads
\begin{linenomath} \begin{equation} \label{eq:kbs}
   \alpha(\omega) = \frac{a_0\,(-\imi\,\omega)}{c_\infty\,
   \sqrt{1 + (-\imi \,\tau_1\,\omega)^{\gamma-1}}}
   + b_0\,(-\imi\,\omega)
   \quad \text{ for } \gamma\in (1,2] \,.
\end{equation}\end{linenomath}
The KBS model is strongly causal. Because strong causality implies weak causality~\cite{kowar2011causality}) the KBS model is also weakly causal.
It satisfies the small frequency approximation
$\Re[ \alpha(\omega)] \asymp a_0
\sin(\frac{\pi}{2}(\gamma-1))/\kl{2\,c_\infty\,\tau_1}
\, \abs{\tau_1\,\omega}^\gamma$ as $\om \to 0$.
Thus \eqref{eq:kbs} behaves  as a power law for small frequencies.
In fact, the KBS has been proposed as a strongly causal alternative to the power
law for the range  $\gamma\in (1,2]$, where the power law fails being
strongly causal.
\end{example}

\begin{example}[Model of Nachman, Smith and Waag]
In the model of Nachman, Smith and Waag (NSW model)
with a single relaxation process, the complex attenuation function
takes the form (see \cite{nachman1990equation})
\begin{linenomath} \begin{equation} \label{eq:nsw}
   \alpha(\omega)
      =  \frac{(-\imi\,\omega)}{c_\infty}\,\left(
      \frac{c_\infty}{c_0}\,\sqrt{  \frac{1 + ({c_0}/{c_\infty})^2\,(-\imi\,\tau_1\,\omega)}{1 + (-\imi\,\tau_1\,\omega) }}   -1 \right) \,.
\end{equation}\end{linenomath}
Equation \eqref{eq:nsw} and its generalization using $N$ relaxation processes
have been derived in~\cite{nachman1990equation} based on sound physical
principles.  The resulting attenuated wave equation is causal and
can even be  reformulated  as differential equation of order $N + 2$.
In~\cite{nachman1990equation,kowar2012photoacoustic} it is shown that the
model~{\eqref{eq:nsw}}  is strongly (and thus weakly) causal provided that
$c_0 < c_\infty$. Then the  wave front speed is bounded from above by $c_\infty$.
We note that the attenuation law (i.e. the real part of $\alpha$ of the NSW
model~\eqref{eq:nsw}), satisfies a power law with  exponent $\gamma = 2$ as $\om \to 0$.
\end{example}

\subsection{The forward operator}

In the sequel, we assume the PA source $\source$ to be supported
in an open set $\Omin$. We assume that measurements are taken on a
piecewise smooth surface $\Gamma \subseteq \partial \Omout$ where
$\Omout \subseteq \R^d$  is an  open set with
$\overline \Omin \subseteq \Omout$.
Further, let $ \tmax \geq \diam\kl{\Omout}/c_0$ denote the
final measurement time and suppose  $\tmin$  is a positive number with
$\tmin c_0 < \dist (\Omin, \Gamma)$.

\begin{definition}[PAT forward operator]
We define the PAT forward operator with weakly causal attenuation law $\alpha$
(see Definition~\ref{def:A}) by
\begin{linenomath} \begin{equation}  \label{eq:Walpha}
\Wo_\al \colon
C_0^\infty(\Omin)  \subseteq
L^2(\Omin) \to  L^2(\Gamma \times (0, \tmax)) \colon
	h \mapsto \rest{p_\al}{\Gamma \times (0, \tmax)},
 \end{equation}\end{linenomath}
where $p_\al$ denotes the causal solution of  \eqref{eq:wavealpha}.
In the case of vanishing attenuation, we   write  $\Wo  \coloneqq \Wo_0$.
\end{definition}

According to  Lemma~\ref{lem:Malpha}, the kernel $m_\al(t,r)$ is smooth for $(t,r)\neq (0,0)$.
Moreover,
\begin{linenomath} \begin{equation} \label{eq:Mal}
    \Mo_\al g(\edot ,t)  \coloneqq \int_0^t  m_\al(t,r) g(\edot ,r) \rmd r
\end{equation}\end{linenomath}
defines a bounded linear operator $\Mo_\al \colon L^2(\Gamma \times (\tmin, \tmax)) \to  L^2(\Gamma \times (0, \tmax))$. The representation of the attenuated pressure in terms of
the un-attenuated pressure given in  Lemma~\ref{lem:Malpha} therefore
shows that $\Wo_\al$ is well defined. Moreover, the following result
shows that it can be extended to a bounded linear operator on $L^2(\Omin)$.

\begin{theorem}[Mapping properties of the PAT forward operator] \label{thm:Walpha}\mbox{}
\begin{enumerate}
\item\label{thm:Walpha-1} $\Wo_\al$ has a unique bounded extension  $\Wo_\al \colon L^2(\Omin) \to  L^2(\Gamma \times (0, \tmax))$;

\item\label{thm:Walpha-2} $\Wo_\al   = \Mo_\al \circ \Wo$.

\end{enumerate}
\end{theorem}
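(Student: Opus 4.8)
The plan is to establish the identity $\Wo_\al = \Mo_\al \circ \Wo$ first on the dense subspace $C_0^\infty(\Omin)$ and then to promote it to all of $L^2(\Omin)$ by a continuity argument, which simultaneously yields both assertions. The preparatory ingredients are: (i) the classical fact that the un-attenuated operator $\Wo = \Wo_0$ extends to a bounded operator $\Wo\colon L^2(\Omin)\to L^2(\Gamma\times(0,\tmax))$ -- this is the well-known mapping property of the spherical-mean/wave-trace operator and can be cited from the PAT literature (e.g.\ \cite{haltmeier2016iterative}) or obtained from standard energy and trace estimates for \eqref{eq:wave0}; (ii) the boundedness of $\Mo_\al\colon L^2(\Gamma\times(\tmin,\tmax))\to L^2(\Gamma\times(0,\tmax))$ already noted after \eqref{eq:Mal}, which rests on the smoothness of $m_\al$ away from the origin from Lemma~\ref{lem:Malpha}; and (iii) finite speed of propagation for \eqref{eq:wave0}: since $h\in C_0^\infty(\Omin)$ is supported in $\Omin$ and $\tmin c_0 < \dist(\Omin,\Gamma)$, the un-attenuated pressure $p_0$ vanishes on $\Gamma$ for all $t\le\tmin$, so $\Wo h$ is supported in $\Gamma\times[\tmin,\tmax]$ and, extended by zero, $\Wo$ maps continuously into $L^2(\Gamma\times(\tmin,\tmax))$. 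In particular the composition $\Mo_\al\circ\Wo\colon L^2(\Omin)\to L^2(\Gamma\times(0,\tmax))$ is well defined and bounded.

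Next, for $h\in C_0^\infty(\Omin)$ the pressures $p_\al$ and $p_0$ are the causal solutions of \eqref{eq:wavealpha} and \eqref{eq:wave0}, so Lemma~\ref{lem:Malpha} applies and gives, for every $z\in\Gamma$ and $t\in(0,\tmax)$,
\begin{align*}
(\Wo_\al h)(z,t) = p_\al(z,t) &= \int_0^t m_\al(t,r)\, p_0(z,r)\, \rmd r \\
&= \int_{\tmin}^{t} m_\al(t,r)\, (\Wo h)(z,r)\, \rmd r = (\Mo_\al \Wo h)(z,t),
\end{align*}
where in the third step we used $p_0(z,r)=(\Wo h)(z,r)$ together with $(\Wo h)(z,r)=0$ for $r\le\tmin$. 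Hence $\Wo_\al = \Mo_\al\circ\Wo$ on $C_0^\infty(\Omin)$.

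Finally, since $C_0^\infty(\Omin)$ is dense in $L^2(\Omin)$ and $\Mo_\al\circ\Wo$ is a bounded operator on $L^2(\Omin)$ that agrees with $\Wo_\al$ on this dense subspace, the bounded linear transformation theorem shows that $\Wo_\al$ admits a necessarily unique bounded extension to $L^2(\Omin)$, and that this extension equals $\Mo_\al\circ\Wo$; this gives \ref{thm:Walpha-1} and \ref{thm:Walpha-2} at once. I expect the only genuinely non-bookkeeping input to be the boundedness of the un-attenuated trace operator $\Wo$ in item~(i); with that taken as known, the argument is essentially Lemma~\ref{lem:Malpha} combined with finite speed of propagation and a density argument, so no separate obstacle remains.
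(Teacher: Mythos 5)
Your proposal is correct and follows essentially the same route as the paper: factor $\Wo_\al = \Mo_\al \circ \Wo$ via Lemma~\ref{lem:Malpha}, use the known boundedness of $\Wo$ into $L^2(\Gamma\times(\tmin,\tmax))$ and the boundedness of $\Mo_\al$ (from smoothness of $m_\al$ away from the origin), and conclude the unique bounded extension by density. You merely spell out the finite-speed-of-propagation and BLT details that the paper leaves implicit.
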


\begin{proof} \mbox{} \ref{thm:Walpha-1}, \ref{thm:Walpha-2}:
It is known that the operator $\Wo \colon L^2(\Omin) \to  L^2(\Gamma \times (\tmin, \tmax)) \colon h \mapsto \rest{p_0}{\Gamma \times (\tmin, \tmax)}$
is well defined, linear, bounded, injective  and has closed range
(see, for example, \cite{haltmeier2016iterative}).
Now suppose $\al \neq 0$. Because $m_\al$ is smooth on
$\set{(t,r) \neq (0,0)}$, it follows that  $\Mo_\al  \colon L^2(\Gamma \times (\tmin, \tmax)) \to L^2(\Gamma \times (0, \tmax))$ is well defined and bounded. Together with   \eqref{eq:Mpalpha} this gives~\ref{thm:Walpha-2} and implies the  boundedness of $\Wo_\al$. In particular, $\Wo_\al$ has a unique bounded extension  $\Wo_\al \colon L^2(\Omin) \to  L^2(\Gamma \times (0, \tmax))$.
\end{proof}


The well known explicit  solution formulas for the standard wave equation give explicit expressions for $\Wo$.  The precise forms of these expression depend on the spatial dimension. For example, in two spatial dimensions
we have
\begin{linenomath} \begin{equation} \label{eq:wave-sol}
\forall  (y,t) \in \Gamma \times (0, \tmax)\colon \quad
\Wo h  \kl{y,t} =
  \frac{1}{2\pi c_0}
  \frac{\partial}{\partial t}
  \int_0^{c_0 t} \int_{\sph^{d-1}}  \frac{r\, h \kl{y +  r \varphi}}{\sqrt{c_0^2 t^2-r^2}}
  \rmd \varphi \rmd r \,.
\end{equation}\end{linenomath}
Together with $\Wo_\al   = \Mo_\al \circ \Wo$
this gives an explicit formula for $\Wo_\al$ that can be
implemented efficiently.

\subsection{The adjoint operator}

Because the forward operator   $\Wo_\al \coloneqq L^2(\Omin) \to
L^2(\Gamma \times (0, \infty))$ is linear and bounded its adjoint
$\Wo_\al^*$ exists and is linear and bounded. In this subsection we give
two expressions for  the adjoint that can be used for the
solution of the inverse problem.

First, we derive an expression  for $\Wo_\al^*$ in the form of an explicit formula.
This representation will be used in our numerical reconstruction algorithm.

\begin{theorem}[Adjoint operator in  \label{thm:ai} integral form]\mbox{}
\begin{enumerate}
\item\label{thm:ai-1} $\Wo_\al^* \colon L^2(\Gamma \times (0, \tmax))  \to
L^2(\Omin)$ is well defined and bounded;
\item\label{thm:ai-2} $\Wo_\al^*    =  \Wo^*  \circ \Mo_\al^*$;
\item\label{thm:ai-3}
$\forall g \in L^2(\Gamma \times (\tmin, \tmax)) \,
\forall t \in (0, \tmax) \colon \Mo_\al^* g (\edot ,r)  = \int_r^\tmax m_\al(t,r) g(\edot ,t) \rmd t$.
\end{enumerate}
\end{theorem}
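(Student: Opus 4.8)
The plan is to deduce everything from the factorization $\Wo_\al = \Mo_\al \circ \Wo$ established in Theorem~\ref{thm:Walpha}\ref{thm:Walpha-2}, together with the elementary fact that the adjoint of a composition of bounded operators is the composition of the adjoints in reverse order. First I would observe that, by Theorem~\ref{thm:Walpha}, $\Wo_\al \colon L^2(\Omin) \to L^2(\Gamma \times (0,\tmax))$ is bounded and $\Wo \colon L^2(\Omin) \to L^2(\Gamma \times (\tmin,\tmax))$ is bounded (here it matters that $\Wo$ is considered as a map into $L^2$ over the time window $(\tmin,\tmax)$, which is the setting in which $\Mo_\al$ is defined, and that $\tmin c_0 < \dist(\Omin,\Gamma)$ guarantees $p_0$ vanishes on $\Gamma$ for $t \le \tmin$, so no information is lost by restricting). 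Since $\Mo_\al \colon L^2(\Gamma \times (\tmin,\tmax)) \to L^2(\Gamma \times (0,\tmax))$ is bounded (Lemma~\ref{lem:Malpha} plus the smoothness of $m_\al$ away from the origin, as used in the proof of Theorem~\ref{thm:Walpha}), each of $\Wo^*$ and $\Mo_\al^*$ exists and is bounded. This immediately gives \ref{thm:ai-1}: $\Wo_\al^* = (\Mo_\al \circ \Wo)^* = \Wo^* \circ \Mo_\al^*$ is a composition of bounded operators $L^2(\Gamma \times (0,\tmax)) \to L^2(\Gamma \times (\tmin,\tmax)) \to L^2(\Omin)$, hence well defined and bounded, and simultaneously gives \ref{thm:ai-2}.

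For \ref{thm:ai-3} the plan is a direct Fubini computation. Writing the $L^2(\Gamma \times (0,\tmax))$-inner product of $\Mo_\al g$ with an arbitrary test function $\psi \in L^2(\Gamma \times (\tmin,\tmax))$ and inserting the definition \eqref{eq:Mal},
\begin{linenomath}\begin{equation*}
\inner{\Mo_\al g}{\psi}
= \int_\Gamma \int_0^\tmax \Bigl( \int_0^t m_\al(t,r)\, g(y,r)\, \rmd r \Bigr) \overline{\psi(y,t)} \, \rmd t \, \rmd S(y)\,,
\end{equation*}\end{linenomath}
I would exchange the order of the $r$- and $t$-integrals. The region $\{0 \le r \le t \le \tmax\}$ is swept out equivalently by $\{r \le t \le \tmax,\ 0 \le r \le \tmax\}$, so after swapping, the inner integral becomes $\int_r^\tmax m_\al(t,r)\,\overline{\psi(y,t)}\,\rmd t$, and the kernel $m_\al$ is real valued (it is an inverse Fourier transform of a function whose real/imaginary parts have the evenness/oddness of Definition~\ref{def:A}\ref{def:A1}, so $\ifourier_t$ of it is real), so the conjugate can be pulled out to identify $\Mo_\al^* \psi(y,r) = \int_r^\tmax m_\al(t,r)\,\psi(y,t)\,\rmd t$. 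That is exactly the claimed formula (after renaming $\psi \mapsto g$).

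The one technical point that needs a word of care — and the only place where anything beyond bookkeeping is required — is the legitimacy of the Fubini exchange and the a priori integrability needed to even write the inner product: this rests on $\Mo_\al$ being a bona fide bounded operator between the stated $L^2$-spaces, which in turn rests on the kernel bound for $m_\al(t,r)$ on the triangle $0 \le r \le t \le \tmax$. Since $(t,r) = (0,0)$ is excluded from this closed triangle only as a single boundary point and $m_\al$ is $C^\infty$ on $\R^2 \setminus \{(0,0)\}$ by Lemma~\ref{lem:Malpha}, one should check that $m_\al$ stays bounded (or at least is in $L^2$ of the triangle, e.g.\ via a Schur-test type estimate) near the origin; this is implicitly what the proof of Theorem~\ref{thm:Walpha} invokes, and I would simply cite that boundedness rather than re-derive it. Given boundedness, Fubini applies on the product of finite-measure domains and the computation above goes through verbatim, completing \ref{thm:ai-3} and hence the theorem.
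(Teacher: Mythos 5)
Your proposal is correct and takes essentially the same route as the paper: parts (a) and (b) come from the factorization $\Wo_\al = \Mo_\al \circ \Wo$ of Theorem~\ref{thm:Walpha} together with boundedness of the factors, and part (c) is the standard Fubini/transpose computation for the Volterra kernel, which the paper simply states without writing out. The only caveat you flag, the behaviour of $m_\al$ near $(0,0)$, is moot in this setting because the kernel is only evaluated on the region $\tmin \le r \le t \le \tmax$ with $\tmin > 0$ (this is precisely why $\Mo_\al$ is defined on $L^2(\Gamma\times(\tmin,\tmax))$), which is the boundedness already invoked in the proof of Theorem~\ref{thm:Walpha}.
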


\begin{proof} \mbox{}
\ref{thm:ai-1}: According to Theorem~\ref{thm:Walpha}~\ref{thm:Walpha-1},
$\Wo_\al \colon
L^2(\Omin) \to  L^2(\Gamma \times (0, \tmax)) $ is linear and bounded. Therefore, its  adjoint
is well defined and bounded, too.

\ref{thm:ai-2}:
Follows from Theorem~\ref{thm:Walpha}~\ref{thm:Walpha-2}.

\ref{thm:ai-3}:
 According to the definition of $\Mo_\al \colon L^2(\Gamma \times (\tmin, \tmax)) \to  L^2(\Gamma \times (0, \tmax))$ we have  $ \Mo_\al  g (y,t)  \coloneqq \int_0^t
  m_\al(t,r) g(y,r) \rmd r$. Therefore
  $\Mo_\al^* \colon L^2(\Gamma \times (0, \tmax)) \to  L^2(\Gamma \times (\tmin, \tmax))$
  is given by $\Mo_\al^* g (y,r)  = \int_r^\tmax m_\al(t,r) g(y,t) \rmd t$.
\end{proof}

Note that the adjoint $\Wo^*$ in the absence of attenuation can be
given by  an explicit expression and  therefore  the attenuated adjoint
 $\Wo_\al^*    =  \Wo^*  \circ  \Mo_\al^*$ is also given by an explicit formula.
The actual expressions for $\Wo^*$  depends on the spatial dimension.
For  example,  in two  spatial dimensions, we have
\begin{linenomath} \begin{equation}  \label{eq:ad-sol}
\forall x \in \Omin \colon \quad
\kl{\Wo^* g}\kl{x}
    =
    -\frac{1}{2 \pi}
    \int_{\Gamma}
    \int_{\sabs{x-y}}^\tmax
    \frac{\partial_t g \kl{y,t}}
    {\sqrt{c_0^2 t^2 - \sabs{x-y}^2}}   \, \rmd t  \, \ds(y) \,.
\end{equation}\end{linenomath}

Our next results show that the adjoint operator can additionally
be described by an attenuated wave equation.
In  absence of attenuation, similar formulations for the adjoint have been derived in \cite{arridge2016adjoint,belhachmi2016direct,haltmeier2016iterative}.
For that purpose, we denote by   $\delta_{\Gamma} $ the tempered
distribution on $\R^d \times \R$ defined by
$\inner{\delta_{\Gamma}}{\phi} = \int_{\R} \int_{\Gamma} \phi (x,t) \, \ds(x) \, dt
$ for $\phi \in C_0^\infty(\R^d \times \R)$. Furthermore, we denote by  $\Do_\al^\ast$
the formal $L^2$-adjoint of $\Do_\al$ given by the time convolution
with  the time reversed kernel $\ifourier_t(\al)(-t)$.

\begin{theorem}[Adjoint operator in wave equation form]
Let \label{thm:adjointw1} $\al$ be any weakly causal attenuation
function. For $g \in C_0^\infty(\Gamma \times (0,\tmax)) $ let  $q_\al$ be the solution of the adjoint attenuated wave equation
\begin{linenomath} \begin{equation}   \label{eq:attenuated1}
\kl{ \Do^*_\al  -
\frac{1}{c_0}\frac{\partial}{\partial t }  }^2  q_\al(x,t)
-
\Delta q_\al(x,t)
=  - \delta_{\Gamma}(x) \, g(x,t)
\quad
\text{ on } \R^d \times \R \,,
 \end{equation}\end{linenomath}
with $q_\al (\edot ,t) = 0$ for $t > \tmax$. Then,
\begin{linenomath} \begin{equation}  \label{eq:adjointw1}
\Wo^*_\alpha (g)  =  \frac{\partial  q_\al}{\partial t } (\edot,0)
 \,.
\end{equation}\end{linenomath}
 \end{theorem}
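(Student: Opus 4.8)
The plan is to establish the duality identity $\inner{\Wo_\al h}{g} = \inner{h}{\partial_t q_\al(\edot,0)}$ for every $h \in C_0^\infty(\Omin)$; since $C_0^\infty(\Omin)$ is dense in $L^2(\Omin)$, $\Wo_\al$ is bounded by Theorem~\ref{thm:Walpha}, and $\partial_t q_\al(\edot,0)$ will be seen to lie in $L^2(\Omin)$, this yields \eqref{eq:adjointw1}. Write $\mathcal A_\al \coloneqq \kl{\Do_\al + \tfrac1{c_0}\partial_t}^2 - \Delta$ for the forward wave operator of \eqref{eq:wavealpha}, so that $\mathcal A_\al p_\al = \delta'(t) h(x)$ with $p_\al$ the causal solution and $\Wo_\al h = \rest{p_\al}{\Gamma\times(0,\tmax)}$. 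Since $h$ is real valued, the Greens function representation \eqref{eq:green} shows $p_\al$ is real; likewise $q_\al$ is real because $\delta_{\Gamma}g$ is real and $\Do_\al^*$ has the real kernel $\ifourier_t(\al)(-t)$. Hence all $L^2$-pairings below are genuinely bilinear and no complex conjugates intervene.

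Unwinding the definition of $\delta_{\Gamma}$ and using that $p_\al$ is smooth on a neighbourhood of $\Gamma \times \supp g$ (by smoothness of the Greens function off the source), the left hand side becomes a pairing over $\R^{d+1}$, $\inner{\Wo_\al h}{g} = \inner{p_\al}{\delta_{\Gamma}\,g}$. By the adjoint equation \eqref{eq:attenuated1} we have $\delta_{\Gamma}\,g = - \mathcal A_\al^\dagger q_\al$, where $\mathcal A_\al^\dagger \coloneqq \kl{\Do_\al^* - \tfrac1{c_0}\partial_t}^2 - \Delta$; indeed $\Do_\al$ has formal adjoint $\Do_\al^*$ by definition, $\partial_t$ has formal adjoint $-\partial_t$, so $\Do_\al + \tfrac1{c_0}\partial_t$ has formal adjoint $\Do_\al^* - \tfrac1{c_0}\partial_t$, and squaring together with $(-\Delta)^\dagger = -\Delta$ gives $\mathcal A_\al^\dagger$. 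Granting that $\mathcal A_\al^\dagger$ may be transferred back onto $p_\al$ without boundary terms, i.e. $\inner{p_\al}{\mathcal A_\al^\dagger q_\al} = \inner{\mathcal A_\al p_\al}{q_\al}$, we obtain
$\inner{\Wo_\al h}{g} = -\inner{p_\al}{\mathcal A_\al^\dagger q_\al} = -\inner{\mathcal A_\al p_\al}{q_\al} = -\inner{\delta'(t)h(x)}{q_\al(x,t)} = -\int_{\R^d} h(x)\,\sinner{\delta'(t)}{q_\al(x,\edot)}\,\rmd x = \int_{\Omin} h(x)\,\partial_t q_\al(x,0)\,\rmd x$,
which is the asserted identity. (One should also note at the outset that \eqref{eq:attenuated1} together with $q_\al(\edot,t)=0$ for $t>\tmax$ determines $q_\al$ uniquely: after time reversal $t\mapsto \tmax-t$ the operator $\Do_\al^* - \tfrac1{c_0}\partial_t$ becomes $\Do_\al + \tfrac1{c_0}\partial_t$, so \eqref{eq:attenuated1} turns into a forward attenuated wave equation of the type \eqref{eq:w}, for which existence and uniqueness of the causal solution were recalled below \eqref{eq:w}.)

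The main obstacle is precisely the justification of $\inner{p_\al}{\mathcal A_\al^\dagger q_\al} = \inner{\mathcal A_\al p_\al}{q_\al}$, i.e. the vanishing of all boundary contributions produced when moving the operator. In time, the endpoint terms that can occur are values at $t=\pm\infty$ of products of $p_\al,\ \partial_t p_\al,\ \Do_\al p_\al$ against $q_\al,\ \partial_t q_\al,\ \Do_\al^* q_\al$; these all vanish because $p_\al$ is causal — and so is $\Do_\al p_\al$, since $\ifourier_t(\al)$ is supported in $t\ge0$ by \ref{def:A3} — while $q_\al(\edot,t)=0$ for $t>\tmax$ and $\Do_\al^* q_\al(\edot,t)=0$ for $t>\tmax$, its kernel $\ifourier_t(\al)(-t)$ being supported in $t\le0$; the interchange of the two time convolutions is a Fubini argument, legitimate once absolute integrability is secured by these support properties. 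In space, the $-\Delta$-term yields, after Green's identity on $\sabs{x}\le R$, the flux integral $\int_{\sabs{x}=R}\kl{p_\al\,\partial_\nu q_\al - q_\al\,\partial_\nu p_\al}\,\ds$, which tends to $0$ as $R\to\infty$ because $p_\al(\edot,t)$ and $\nabla p_\al(\edot,t)$ decay at spatial infinity by \eqref{eq:green} — the factor $1/\sabs{x}$ together with $\sabs{e^{-\sabs{x}\al(\om)}}=e^{-\sabs{x}\real(\al(\om))}\le1$ — while $q_\al(\edot,t)$, generated by the compactly supported single layer $\delta_{\Gamma}g$, is spatially localized. The residual regularity bookkeeping — $h\in C_0^\infty(\Omin)$ makes $p_\al$ smooth near $\Gamma\times\supp g$; $q_\al$ is continuous across $\Gamma$ and $H^1_{\mathrm{loc}}$ in space, so Green's identity holds in the weak sense; and $q_\al$ is regular enough in $t$ near $t=0$ over $\overline{\supp h}$ that $\partial_t q_\al(\edot,0)\in L^2(\Omin)$ and $\sinner{\delta'(t)}{q_\al(x,\edot)}$ is meaningful — is conveniently dealt with by inserting smooth temporal and spatial cut-offs and passing to the limit, exactly as in the $\al=0$ constructions cited. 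Two alternatives that sidestep most of the analysis: apply $\fourier_t$ and argue with the resulting complex Helmholtz resolvent equations, where the corresponding Green-type identity is classical; or combine Theorem~\ref{thm:ai} with a transformation rule expressing $q_\al$ through the un-attenuated backward solution $q_0$, in analogy with Lemma~\ref{lem:Malpha}, and invoke the known $\al=0$ version of the statement.
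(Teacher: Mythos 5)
Your proposal is correct and follows essentially the same route as the paper's proof in Appendix~\ref{ap:proof}: pair $p_\al$ with $q_\al$, move the wave operator across the pairing (where $\Do_\al+\tfrac{1}{c_0}\partial_t$ becomes $\Do_\al^*-\tfrac{1}{c_0}\partial_t$), use the causal support of $p_\al$ and the anticausal support of $q_\al$ to kill the temporal endpoint terms, invoke \eqref{eq:attenuated1} to turn the pairing into $-\int_\R\int_\Gamma p_\al\, g$, extract $\int h\,\partial_t q_\al(\edot,0)$ from the source $\delta'(t)h(x)$, and conclude by density of $C_0^\infty(\Omin)$ in $L^2(\Omin)$. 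The only caveat is your claim that $q_\al$ is ``spatially localized'': for merely weakly (not strongly) causal laws the wavefront speed may be infinite, so $q_\al$ only decays rather than having compact spatial support; this point is, however, treated no more explicitly in the paper's own argument than in yours.
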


\begin{proof}
See Appendix~\ref{ap:proof}.
\end{proof}

From Theorem~\ref{thm:adjointw1}, we immediately obtain the following
alternative form.

\begin{corollary}[Adjoint operator in time reversed wave equation form]
Suppose the assumptions of Theorem~\ref{thm:adjointw1} \label{thm:adjointw2}  are satisfied and let $q_\al^*$
be the causal solution of
\begin{linenomath} \begin{equation}  \label{eq:attenuated2}
\kl{ \Do_\al  +
\frac{1}{c_0}\frac{\partial}{\partial t }  }^2  q_\al^*(x,t)  - \Delta q_\al^*(x,t)  =  - \delta_{\Gamma}(x) \, g(x,\tmax-t)
\quad \text{ on } \R^d \times \R \,.
\end{equation}\end{linenomath}
Then we have
\begin{linenomath} \begin{equation}  \label{eq:adjointw2}
	\Wo^*_\alpha (g)  =  - \frac{\partial  q_\al^*}{\partial t } (\edot,\tmax) \,.
\end{equation}\end{linenomath}
\end{corollary}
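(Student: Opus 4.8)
The plan is to obtain the time-reversed formulation directly from Theorem~\ref{thm:adjointw1} by the substitution $t \mapsto \tmax - t$. Starting from the solution $q_\al$ of the adjoint attenuated wave equation \eqref{eq:attenuated1}, I would define $q_\al^*(x,t) \coloneqq q_\al(x, \tmax - t)$ and check two things: first, that $q_\al^*$ solves \eqref{eq:attenuated2}; second, that the boundary/support conditions transform correctly, so that $q_\al^*$ is indeed the \emph{causal} solution of \eqref{eq:attenuated2}. For the second point, note that $q_\al(\edot, t) = 0$ for $t > \tmax$ translates under $t \mapsto \tmax - t$ into $q_\al^*(\edot, t) = 0$ for $t < 0$, which is exactly causality. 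For the first point, the key computation is how the operator transforms: under the reflection $Rf(t) = f(\tmax - t)$ one has $\tfrac{\partial}{\partial t}(Rf) = -R(\tfrac{\partial}{\partial t} f)$, and, crucially, the time-convolution operator $\Do_\al^*$ with kernel $\ifourier_t(\al)(-t)$ satisfies $\Do_\al^*(Rf) = R(\Do_\al f)$, because conjugating a convolution by time reversal reverses the kernel back to $\ifourier_t(\al)(t)$, which is the kernel of $\Do_\al$. Hence
\begin{linenomath} \begin{equation*}
R\kl{\Do_\al + \tfrac{1}{c_0}\tfrac{\partial}{\partial t}} = \kl{\Do_\al^* - \tfrac{1}{c_0}\tfrac{\partial}{\partial t}} R \,,
\end{equation*}\end{linenomath}
so applying $R$ to \eqref{eq:attenuated2} turns its left-hand side into the left-hand side of \eqref{eq:attenuated1} acting on $q_\al$, while $\Delta$ is untouched by the time reflection. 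On the right-hand side, $R$ sends $-\delta_\Gamma(x)\,g(x,\tmax-t)$ to $-\delta_\Gamma(x)\,g(x,t)$, matching \eqref{eq:attenuated1}. Thus $q_\al = R q_\al^*$, i.e. $q_\al^*(x,t) = q_\al(x,\tmax-t)$, solves the stated problem, and by the uniqueness of the causal solution (Lemma-type uniqueness invoked after \eqref{eq:green}, generalizing \cite{kowar2010integral}) it is \emph{the} causal solution.

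It then remains to translate the recovery formula. Differentiating the identity $q_\al^*(x,t) = q_\al(x,\tmax-t)$ in $t$ gives $\tfrac{\partial q_\al^*}{\partial t}(x,t) = -\tfrac{\partial q_\al}{\partial t}(x,\tmax-t)$, so evaluating at $t = \tmax$ yields $\tfrac{\partial q_\al^*}{\partial t}(\edot,\tmax) = -\tfrac{\partial q_\al}{\partial t}(\edot,0)$. Combining with \eqref{eq:adjointw1} of Theorem~\ref{thm:adjointw1}, namely $\Wo_\al^*(g) = \tfrac{\partial q_\al}{\partial t}(\edot,0)$, gives $\Wo_\al^*(g) = -\tfrac{\partial q_\al^*}{\partial t}(\edot,\tmax)$, which is \eqref{eq:adjointw2}.

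I expect the only genuinely delicate point to be the identity $\Do_\al^* R = R \Do_\al$ (equivalently, that the formal $L^2$-adjoint of $\Do_\al$ is time-convolution with the reflected kernel), and the bookkeeping needed to apply it to the \emph{squared} operator $(\Do_\al + c_0^{-1}\partial_t)^2$; since $R$ is an involution, one pushes it through each factor and the two sign changes from $\partial_t$ combine with the kernel reflections exactly as needed, but this should be spelled out carefully, ideally at the level of tempered distributions so that $\delta_\Gamma$ and the distributional solutions are handled rigorously. Everything else is a routine change of variables.
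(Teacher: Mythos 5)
Your proposal is correct and follows exactly the paper's route: the paper's proof consists of the single line that the corollary follows from Theorem~\ref{thm:adjointw1} with $q_\al^*(x,t) = q_\al(x,\tmax-t)$, which is precisely your substitution. The extra details you supply --- the commutation relations $R\,\Do_\al = \Do_\al^*\,R$ and $R\,\partial_t = -\partial_t\,R$ under the reflection $R f(t)=f(\tmax-t)$, the translation of the side condition $q_\al(\edot,t)=0$ for $t>\tmax$ into causality, and the sign bookkeeping in \eqref{eq:adjointw2} --- are accurate and simply make explicit what the paper leaves to the reader.
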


\begin{proof}
Follows from Theorem~\ref{thm:adjointw1}  with
$q_\al^*(x,t) = q_\al(x,\tmax-t)$.
\end{proof}

Note that~\eqref{eq:attenuated1}, \eqref{eq:adjointw1} and \eqref{eq:attenuated2}, \eqref{eq:adjointw2} have
a similar form  to the time reversal used in \cite{kalimeris2013photoacoustic,ammari2011time,kowar2014time,kowar2014time2,burgholzer2007compensation,treeby2010photoacoustic}. However, unlike  the time-reversed wave equation, where the corresponding wave blows up, the adjoint formulation has the same stability properties as the forward equation. Therefore, in contrast to  the time reversal procedure, there is no need to include regularization to implement~\eqref{eq:attenuated1}  or \eqref{eq:attenuated2}.
Accurate numerical solution of dissipative wave equations for realistic parameters is challenging and numerically quite
expensive.  Let us consider this issue for the wave equation of Nachmann, Smith and Waag with only one relaxation process. The relaxation time $\tau_1$ for fluids is about $\unit[100]{ns} $ and the discretization step size  should be at least close the relaxation time. This results in a  time discretization much finer
than usually employed for the simulation of un-attenuated waves, and therefore  an increased numerical cost.

\section{Solution of the inverse problem}
\label{sec:inverse}

In this section we solve the inverse problems of PAT with attenuation
using iterative regularization methods. Our method comes with a
clear convergence analysis. We further present details on its actual
implementation. Throughout the rest of this paper, we write
$\enorm{}$ for the regular $L^2$-norms on $L^2(\Omin)$ and
$L^2(\Gamma \times (0, \infty))$ as well as for the
operator norm between these two spaces.

\subsection{The Landweber method}
\label{sec:landweber}

The Landweber method for the solution of  $\Wo_\al \source \simeq   g^\delta$ is defined by
\begin{linenomath} \begin{equation}  \label{eq:landweber}
    \forall n\in \N \colon \quad
    h_{n+1}^\delta  =
    h_n^\delta -  \la    \Wo_\al^* \kl{ \Wo_\alpha h_n^\delta  -  g^\delta } \,.
\end{equation}\end{linenomath}
Here $g^\delta$ are the noisy data,
$0 < \la \leq \snorm{\Wo_\al}^{-2} $ is the step size,
and $h^\delta_0  \coloneqq h^\delta$  some  initial guess.
The superscript  $\delta$ indicates the noise level, which means
that an  estimate $\norm{\Wo_\al \source^\star  - g^\delta} \leq \delta$
is available, where  $\source^\star$ is the unknown true solution.

The Landweber iteration will be combined with Morozov's discrepancy
principle.  According to the discrepancy principle,  the iteration is terminated at the index
 $n = n(\delta, y^\delta)$, when for the first time
\begin{linenomath} \begin{equation}  \label{eq:dep}
	\norm{h_{n+1}^\delta  -  g^\delta }  \leq \tau \delta
\end{equation}\end{linenomath}
with some fixed $\tau >1$.
From Theorem \ref{thm:Walpha}  and the general theory of
iterative regularization methods, we obtain  the following result.

\begin{theorem}[Convergence of the  Landweber iteration]
Suppose $h \in L^2(\Omin)$, $\eps >0$ and let $g^\delta \in L^2((0, \tmax) \times \Gamma)$ satisfy $\snorm{g^\delta - \Wo_\al h}  \leq \delta$.

\begin{enumerate}
\item \emph{Exact data:} If $\delta=0$, then  $(h_n)_{n \in \N}$ strongly converges to the
$h$.

\item \emph{Noisy data:}
Let $(\delta_m)_{m\in \N} \in (0, \infty)^\N$ converge  to zero and let $(h_m)_{m\in \N} \in L^2((0,\tmax)\times \Gamma)^\N$
satisfy
$\snorm{h_m- \Wo_\al h} \leq \delta_m$.
Then the following hold:
\begin{itemize}
\item The stopping indices
 $n_m := n_*(\delta_m, h_m) $  are  well defined by  \eqref{eq:dep};
 \item
 We have  $\snorm{h_{n_m}^{\delta_m} - h} \to 0 $ as $m\to \infty$.
\end{itemize}
\end{enumerate}
\end{theorem}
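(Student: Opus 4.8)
The plan is to reduce the statement to the standard convergence theory of Landweber iteration as a regularization method (see \cite{engl1996regularization,kaltenbacher2008iterative}) and to check that its hypotheses are met here. By Theorem~\ref{thm:Walpha}~\ref{thm:Walpha-1}, $\Wo_\al \colon L^2(\Omin) \to L^2(\Gamma \times (0,\tmax))$ is a bounded linear operator between Hilbert spaces, and the step size restriction $0 < \la \le \snorm{\Wo_\al}^{-2}$ is exactly the one required by that theory. Moreover $\Wo_\al$ is injective: by the proof of Theorem~\ref{thm:Walpha}, $\Wo$ is injective with closed range, and $\Mo_\al$ is injective on $\ran \Wo$ because the Volterra-type relation \eqref{eq:Mpalpha} of Lemma~\ref{lem:Malpha} can be inverted (this is the basis of the two-stage reconstruction schemes recalled in Section~\ref{sec:intro}), so that $\Wo_\al = \Mo_\al \circ \Wo$ has trivial kernel. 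Consequently the equation $\Wo_\al h^\star = g$ has at most one solution, and the $\Wo_\al$-minimum-norm solution relative to any initial guess coincides with that unique solution; this is what makes the limit independent of $h_0^\delta$ and equal to $h$.

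For the \emph{exact data} case ($\delta=0$) the hypothesis $\snorm{g^\delta - \Wo_\al h}\le 0$ forces $g^\delta = \Wo_\al h$, so the data are consistent. Setting $e_n \coloneqq h_n - h$, the recursion \eqref{eq:landweber} reads $e_{n+1} = (\id - \la \Wo_\al^* \Wo_\al)\,e_n$, whence $e_n = (\id - \la \Wo_\al^* \Wo_\al)^n e_0$. Since $0 \le \la \Wo_\al^* \Wo_\al \le \id$, the spectrum of $\id - \la\Wo_\al^*\Wo_\al$ lies in $[0,1]$, and by bounded convergence in the spectral integral $(\id - \la \Wo_\al^* \Wo_\al)^n$ converges strongly to the spectral projection $P_{\ker \Wo_\al^* \Wo_\al} = P_{\ker \Wo_\al}$. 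By injectivity $\ker \Wo_\al = \{0\}$, so $e_n \to 0$, i.e. $h_n \to h$ strongly.

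For the \emph{noisy data} case I would invoke the general result on Landweber iteration stopped by Morozov's discrepancy principle. The residual norms $n \mapsto \snorm{\Wo_\al h_n^\delta - g^\delta}$ are non-increasing, and since the exact datum $\Wo_\al h$ is attainable one shows, in the standard way, that the stopping index $n_*(\delta,g^\delta)$ defined by \eqref{eq:dep} is finite (hence well defined), that $n_*(\delta,g^\delta) \to \infty$, and that $\delta^2 n_*(\delta,g^\delta) \to 0$ as $\delta \to 0$; the general theory then yields $h_{n_*(\delta,g^\delta)}^\delta \to h^\dagger$ as $\delta \to 0$ along any sequence, where $h^\dagger$ is the minimum-norm solution, which by injectivity equals $h$. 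Specializing to the sequences $(\delta_m)$ and $g^{\delta_m} = h_m$ gives the well-definedness of $n_m = n_*(\delta_m, h_m)$ and $\snorm{h_{n_m}^{\delta_m} - h} \to 0$. The only point that is not a verbatim citation of the standard filter/spectral analysis is the injectivity of $\Wo_\al$ established in the first paragraph — without it the iterates would still converge, but only to the minimum-norm solution nearest $h_0^\delta$ rather than to $h$ itself; so I would spend the bulk of the written proof there and merely reference \cite{engl1996regularization,kaltenbacher2008iterative} for the detailed estimates behind the discrepancy-principle part.
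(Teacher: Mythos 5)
Your proposal is correct and follows essentially the same route as the paper: the paper's entire proof consists of noting that $\Wo_\al$ is bounded by Theorem~\ref{thm:Walpha} and then citing the standard convergence theory for the Landweber iteration stopped by the discrepancy principle \cite{engl1996regularization,kaltenbacher2008iterative}. The one addition on your side is the injectivity discussion for $\Wo_\al$ (needed so the limit is $h$ itself rather than the solution nearest the initial guess), a point the paper passes over in silence; note, however, that your claim that $\Mo_\al$ is injective on $\ran \Wo$ is only asserted via the invertibility of \eqref{eq:Mpalpha}, not proved, so it remains a sketched step rather than a complete argument.
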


\begin{proof}
According to the Theorem~\ref{thm:Walpha}, the operator $\Wo_\al$  is bounded.
The claims therefore follow  from standard  results for iterative regularization methods
(see, for example, \cite{engl1996regularization,kaltenbacher2008iterative}).
\end{proof}

The Landweber method is the most basic iterative regularization method
for the solution of inverse problems and behaves  very stable due to the smoothing effect
of the adjoint. On the other it is
quite slow in some applications.  Accelerated version include $\nu$-methods \cite{brakhage1987ill,engl1996regularization},
the CG Algorithm \cite{kammerer1972convergence,hanke1995conjugate},
preconditioned Landweber iterations \cite{egger2005preconditioning}
or Kaczmarz-type iterations \cite{haltmeier2007kaczmarz,haltmeier2009convergence}.
Here we chose  the Landweber iteration  because the  main  aim of  the present paper is  demonstrating  the effectiveness of iterative  methods  for PAT with acoustic attenuation.
Furthermore, for  the considered  application already 10 iterative steps provide very accurate results. Nevertheless, note that generalization to other iterative regularization methods
such as the  steepest descent  or the conjugate gradient  method is straight forward.

Another advantage of the Landweber method is that it can easily be combined with
 a projection step to improve performance. The resulting  projected Landweber method reads
\begin{linenomath} \begin{equation}  \label{eq:landweberP}
   	h_{n+1}^\delta  =
	\Po_C
	\kl{ h_n^\delta -  \la    \Wo_\al^* \kl{ \Wo_\alpha h_n^\delta  -  g^\delta } }\,,
\end{equation}\end{linenomath}
where  $\Po_C$ denotes the  projection on a closed convex set
$C \subseteq L^2(\Omout)$,
Actually, in our numerical implementation, we use the projected
Landweber method  with $C$ being  the cone of non-negative functions,
which turned out  to produce slightly better  results than the pure
Landweber method with a comparable numerical complexity.

\subsection{Implementation of the (projected)  Landweber iteration}

We outline the implementation for the case that
$\Gamma \coloneqq \partial B_\RR(0)$ is a circle of radius $\RR$ in two spatial dimensions. Extension for more general geometries and higher dimensions are straight forward.
Our approach uses the relations   $\Wo_\al      =  \Wo \circ \Mo_\al $ and $\Wo_\al^\ast  = \Mo_\al^* \circ \Wo^*$ (see Theorems~\ref{thm:Walpha} and Theorems~\ref{thm:ai})
 that relate the attenuated pressure to the un-attenuated pressure in the direct and adjoint problems, respectively.

For that purpose the PA source $h \colon \R^2 \to \R$ is represented by a discrete vector $\hnum \in \R^{(N_x+1) \times (N_x+1)} $ obtained by uniform sampling
\begin{linenomath}\begin{align}\label{eq:hnum1}
 h[i] &\simeq h(x_{i})
&& \text{  for $i  = (i_1, i_2)\in \{0, \dots, N_x\}^2$ }
 \\ \label{eq:hnum2}
x_i &= (-R,-R) + i \; \frac{ 2R}{N}
&& \text{  for $i  = (i_1, i_2)\in \{0, \dots, N_x\}^2$ }
 \,.
\end{align}\end{linenomath} Here  $(N_x+1)^2$ is the number of spatial discretization points on an equidistant
Cartesian grid. Further,  any function $g \colon \partial \Omout \times [0, \tmax] \to \R$
is discretely represented by a vector
$\gnum \in \R^{N_\ph \times (N_t+1)}$, with
\begin{linenomath}\begin{align}\label{eq:gnum1}
\gnum[k,\ell]  &\simeq   g( \RR (\cos \varphi_k,\sin \varphi_k), t_\ell  )
&& \text{for }  (k,\ell)  \in \set{0, \dots, N_\varphi -1} \times \set{0, \dots, N_t} \,,
\\ \label{eq:gnum2}
	\varphi_k
	& \coloneqq  k \, \frac{2\pi}{N_\varphi}
	&& \text{for }  k \in\set{ 0, \dots, N_\varphi -1 }\,,
\\ \label{eq:gnum3}
	t_\ell
	&\coloneqq  \ell \, \frac{2 R}{N_t}\,
	&& \text{for } \ell  \in\set{  0, \dots, N_t }\,.
\end{align}\end{linenomath}
 Here $N_\ph$  is the number  of angular samples (detector locations) and
$N_t + 1$ the number of temporal samples. The sampling conditions
obtained  in \cite{haltmeier2016sampling} imply that   $\Delta x  \simeq c_0 \, \Delta t \simeq \RR \,\Delta \ph$, where $\Delta x \coloneqq \sfrac{2\RR}{N_x}$
$\Delta t \coloneqq \sfrac{\tmax}{N_t}$ and
$\Delta \ph \coloneqq \sfrac{2\pi}{N_\ph}$ yield aliasing free sampling.

The Landweber iteration \eqref{eq:landweber} and its projected version
\eqref{eq:landweberP} are implemented by
replacing $\Wo$, $\Wo^*$, $\Mo_\al$ and $\Po_C$ with discrete counterparts
 \begin{linenomath}\begin{align}\label{eq:Wn}
	 &\Wnum
	 \colon \R^{(N_x+1) \times (N_x+1) }  \to \R^{N_\ph \times (N_t+1)} \,,
 \\ \label{eq:Mn}
 	 &\Mnum_\al
	 \colon  \R^{N_\ph \times (N_t+1)}  \to \R^{N_\ph \times (N_t+1)} \,,
\\ \label{eq:Bn}
 	 &\Bnum
	 \colon  \R^{N_\ph \times (N_t+1)}  \to \R^{(N_x+1) \times (N_x+1) } \,,
\\ \label{eq:Pn}
 	 &\Pnum_{\Cnum}
	 \colon  \R^{(N_x+1) \times (N_x+1) }   \to \R^{(N_x+1) \times (N_x+1) } \,.
 	 \end{align}\end{linenomath} The resulting discrete (projected) Landweber method is then defined by
\begin{linenomath} \begin{equation}  \label{eq:landweber-d}
	\forall n \in \N \colon \quad
	\hnum_{n+1}^\delta  =
	\Pnum_{\Cnum} \kl{\hnum_n^\delta -
	\la \;  \Bnum  \Mnum_\al^*
	\kl{ \Mnum_\alpha    \Wnum    \hnum_n^\delta  -  \gnum^\delta }
	} \,.
\end{equation}\end{linenomath}
Note that for the sake of computational efficiency the operator $\Bnum$ will be implemented by a filtered backprojection procedure that which is the exact discrete adjoint of  $\Wnum$.
On the other hand, as numerical approximation of $ \Mo_\al^*$ we take the exact
adjoint of the discretization of $\Mnum_\al$.  Finally, the discrete projection
will simply  be taken as $\Pnum_{\Cnum} (\hnum) \coloneqq \operatorname{max} \set{0, \hnum}$, which is the projection on convex cone $\Cnum \coloneqq  [0, \infty)^{(N_x+1) \times (N_x+1) }  \subseteq  \R^{(N_x+1) \times (N_x+1) }$.  How to implement the other operators will be
described in the following subsections.

\begin{remark}[Numerical complexity]
Under the reasonable assumption $N_x\sim N_\ph \sim N_t$, one
iterative step \eqref{eq:landweber-d} requires $\mathcal{O} (N_x^3)$
floating point operations (FLOPS) with small leading constants and comparable
to the  effort of a standard FBP reconstruction algorithm.
Since a small number of around 10 turned out to be sufficient, our algorithm
is numerically  quite efficient.
\end{remark}

\subsection{Implementation of $\Wo$ and its adjoint}

For numerically approximating the un-attenuated wave operator $\Wo$, we
discretize the explicit formula~\eqref{eq:wave-sol}.
For that purpose, we write
\eqref{eq:wave-sol} in the form $\Wo h = c_0^{-1} \partial_t  \Ao  \Mo h$,
where
\begin{linenomath}\begin{align}\label{eq:means}
\Mo h  \kl{y,r}
&\coloneqq
  \frac{1}{2\pi}
  \int_0^{2\pi}  h \kl{y + r (\cos \beta, \sin \beta)}
  \rmd \beta \,,  \\
  \Ao g  \kl{y,t}
  &\coloneqq
  \int_0^{c_0t} \frac{r \, g (y,r)}{\sqrt{c_0^2t^2-r^2}}
  \rmd r
\end{align}\end{linenomath} for $(y,t) \in \Gamma \times (0, \tmax)$. The operator $\Mo$ is the spherical mean Radon transform and $\Ao$ the Abel transform (in the second component).
 We compute discrete  spherical means  by approximately evaluating   \eqref{eq:means} at the
all discretization points $(\RR \ph_k, c_0 t_j)$ using the trapezoidal
rule for discretizing the integral over $\beta$. The
values  of $h$ for applying the trapezoidal  rule  are   obtained  by
using bilinear interpolation of  $\hnum$.   Next for any $k$, the Abel transform is approximately computed by replacing  $g(y_k, \edot)$  with the linear interplant  through the data pairs $(c_0 t_\ell, g(y_k, c_0 t_\ell))$. Finally,
we approximate the time derivative $\partial_t$ by finite differences.
Inserting these approximations  to $\Wo  = c_0^{-1} \partial_t  \Ao  \Mo $
yields the discretization $\Wnum$.

The adjoint wave operator $\Wo^*$ is implemented in a similar manner using   \eqref{eq:ad-sol}  which can be written in the form $\Wo  = - c_0^{-1}
 \Mo^* \Ao^*  \partial_t$.  The operators  $\Ao^*$ and $ \partial_t$ are discretized as above.
 The adjoint $\Mo^*$ of the spherical mean operator is implemented using a backprojection procedure described in detail
 in~\cite{burgholzer2007temporal,finch2007inversion}.

\subsection{Implementation  of $\Mo_\al$}

Recall that for any $(x,t) \in \Gamma \times [0, \tmax]$, we have
\begin{linenomath} \begin{align}\label{eq:M1}
     \Mo_\al g(x,t)
     &= \int_0^{\tmax} m_\al(t,r)\,g(x,r)\, \rmd r \,,
     \\ \label{eq:M2}
     \fourier_t (m_\al  (\edot,r)  )(\om)
     &= \frac{\om}{ \om/c_0 + \imi \al(\om)}
\, e^{\imi (\om / c_0  + \imi \al(\om)) \abs{r}} \,.
     \end{align}\end{linenomath}
     The operator $\Mo_\al$ is discretized based on these relations by approximately
computing $m_\al(t_{\ell}, t_{\ell'})$ using~\eqref{eq:M2} and then discretizing~\eqref{eq:M1}.
This yields the discrete approximation
\begin{linenomath}\begin{align}\label{eq:Mn2}
     (\Mnum_\al \gnum)[k,\ell]
     & \coloneqq
     \Delta t \;
     \sum_{\ell'=0}^{N_t} \mnum_\al[\ell,\ell'] \, \gnum [k,\ell']   \,,
     \\ \label{eq:Mn3}
     \operatorname{FFT}(\mnum_\al)[\ell,\ell']
     & \coloneqq
      \frac{\om[\ell]}{ \om[i]/c_0 + \imi \al(\om[\ell])}
\, e^{\imi (\om[\ell] / c_0  + \imi \al(\om[\ell])) \abs{r[\ell']}} \,.
\end{align}\end{linenomath} Here $\operatorname{FFT}$ denotes the discrete Fourier transform in the
first component and the discrete kernel $\mnum_\al[\ell,\ell']$ is obtained
by   applying the  inverse fast Fourier  transform in the first component.
Moreover, $\om[\ell] = \ell  \Delta \om + N_t \pi / \tmax$ with $\Delta \om = 2\pi / \tmax$.
Finally, the adjoint $\Mo_\al^\ast$ is implemented by the adjoint
$(\Mnum_\al^\ast \gnum)[k,\ell]
     \coloneqq
     \Delta t \;
     \sum_{\ell'=0}^{N_t} \mnum_\al[\ell',\ell] \, \gnum [k,\ell']$
of the discrete operator~\eqref{eq:Mn2},~\eqref{eq:Mn3}.

\begin{figure}[htb!]
\includegraphics[width=0.495\textwidth]{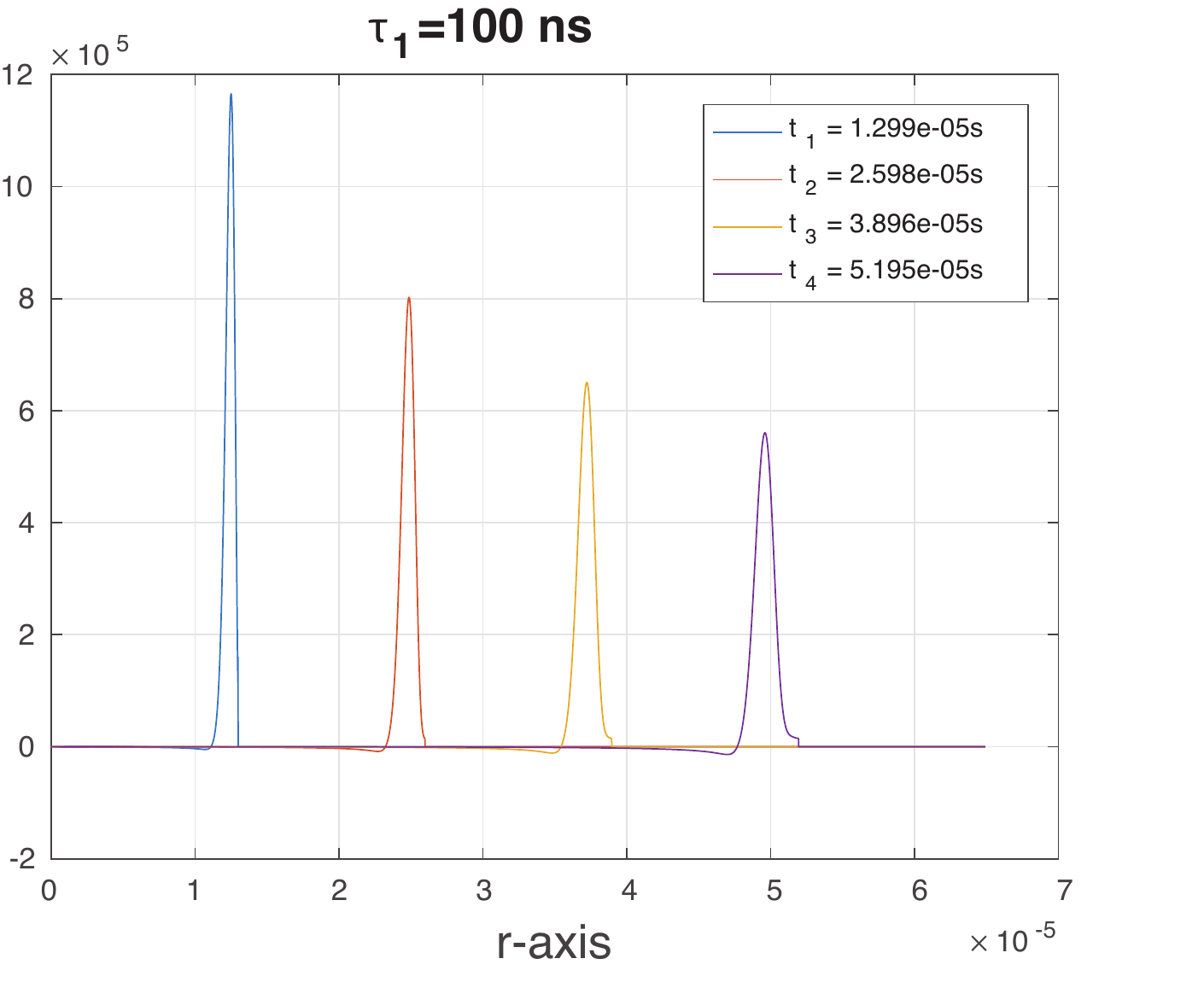}
\includegraphics[width=0.495\textwidth]{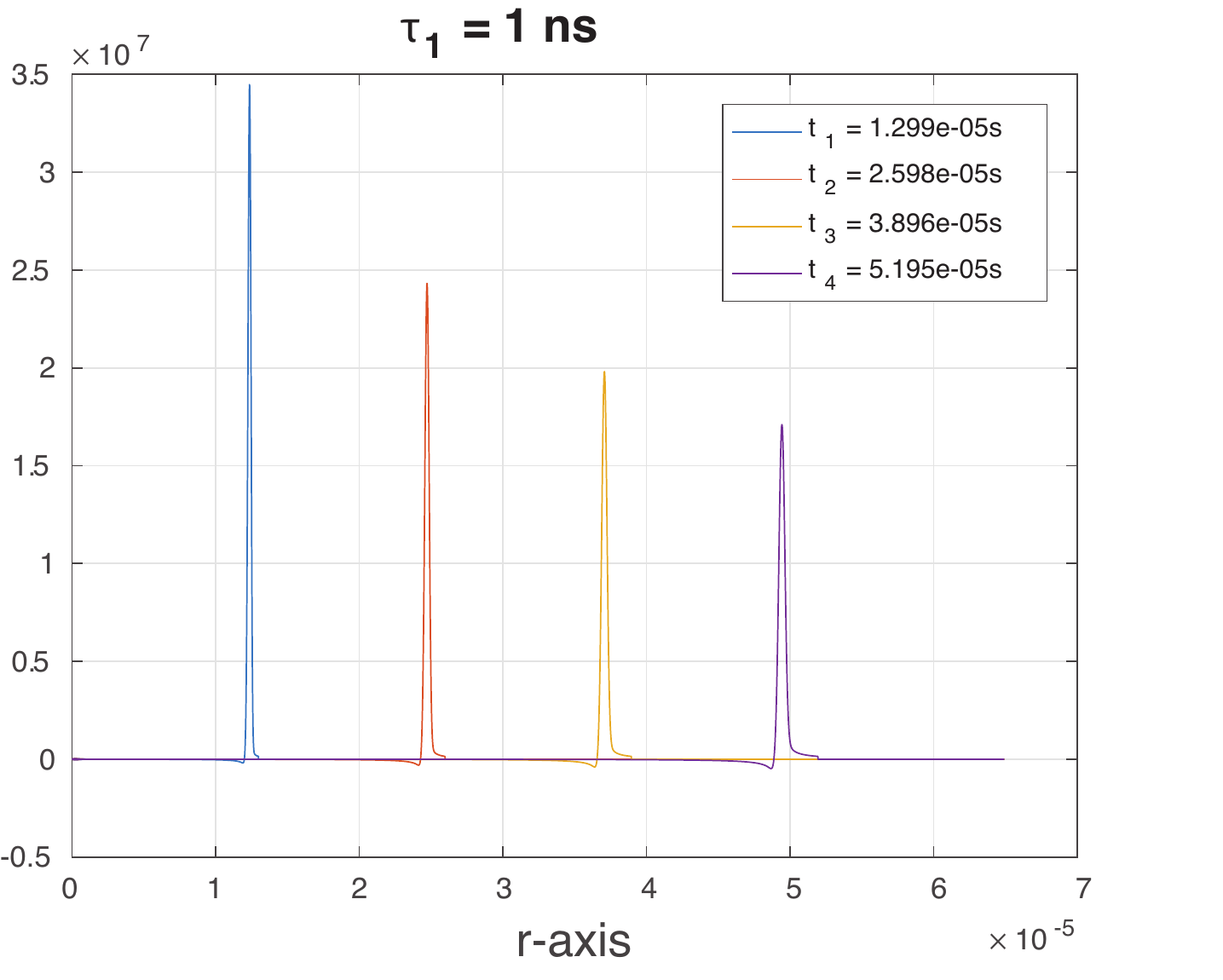}
\caption{\textsc{Visualization of the kernel $m_\al(t, \edot )$.} Left: Kernel using relaxation time
$\tau_1 = \unit[100]{ns}$ (strong attenuation). Right:
Kernel using  relaxation time  $\tau_1 = \unit[1]{ns}$
(weak attenuation).}
\label{fig:kernel}
\end{figure}

\section{Numerical results}
\label{sec:num}

In this section, we present numerical simulations for
PAT with and without  attenuation. For all numerical results
presented below, the region $\Omout$ is a disc of radius $\RR$.
The final measurement time is taken as $\tmax = 2 \RR / c_0$,  where $c_0 = \unit[1540]{m/s}$ is taken as the
 sound speed in water. For all reconstruction results we take
 $ N_x  =  N_t =  N_\ph$ in the reconstruction.
 In order to avoid inverse crime, the data have been computed using
 a finer temporal discretization.

\subsection{Pressure simulation}

For the reconstruction results using attenuation data,  we will employ the NSW model. It has quadratic frequency dependence for small frequencies.
This describes attenuation  of water that has an exponent close to $2$ for small frequencies~\cite{kowar2012photoacoustic,nachman1990equation}.
We use $c_\infty = \unit[1623]{m/s}$. For simplicity, we restrict ourselves to a single  relaxation process  in the NSW model. For the relaxation time $\tau_1$, we consider  two
cases, for which we also consider different radii of the measurement circle:
\begin{itemize}
\item Case 1:  $\RR = \unit[5]{cm}$ and $\tau_1 = \unit[100]{ns}$;
\item Case 2:  $\RR = \unit[5]{mm}$ and $\tau_1 = \unit[1]{ns}$.
\end{itemize}
Figure~\ref{fig:kernel} shows the corresponding kernel function
$m_\al(t, \edot )$ for the above different relaxation times. We see that the
support of $m_\al(t, \edot )$ increases with the relaxation time indicating
that the  larger relaxation time corresponds to stronger attenuation.

\begin{figure}[htb!]
\includegraphics[width=0.495\textwidth]{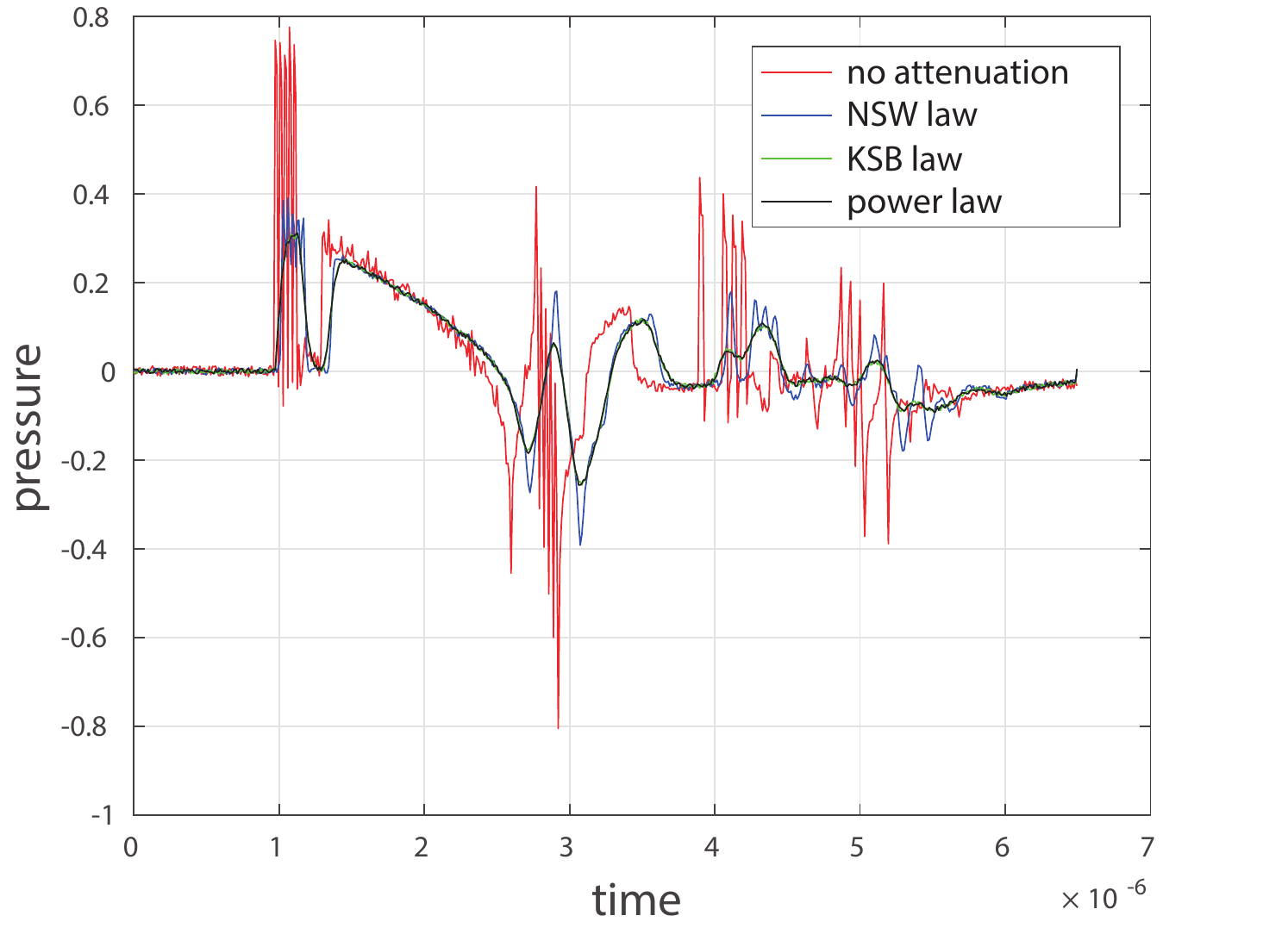}
\includegraphics[width=0.495\textwidth]{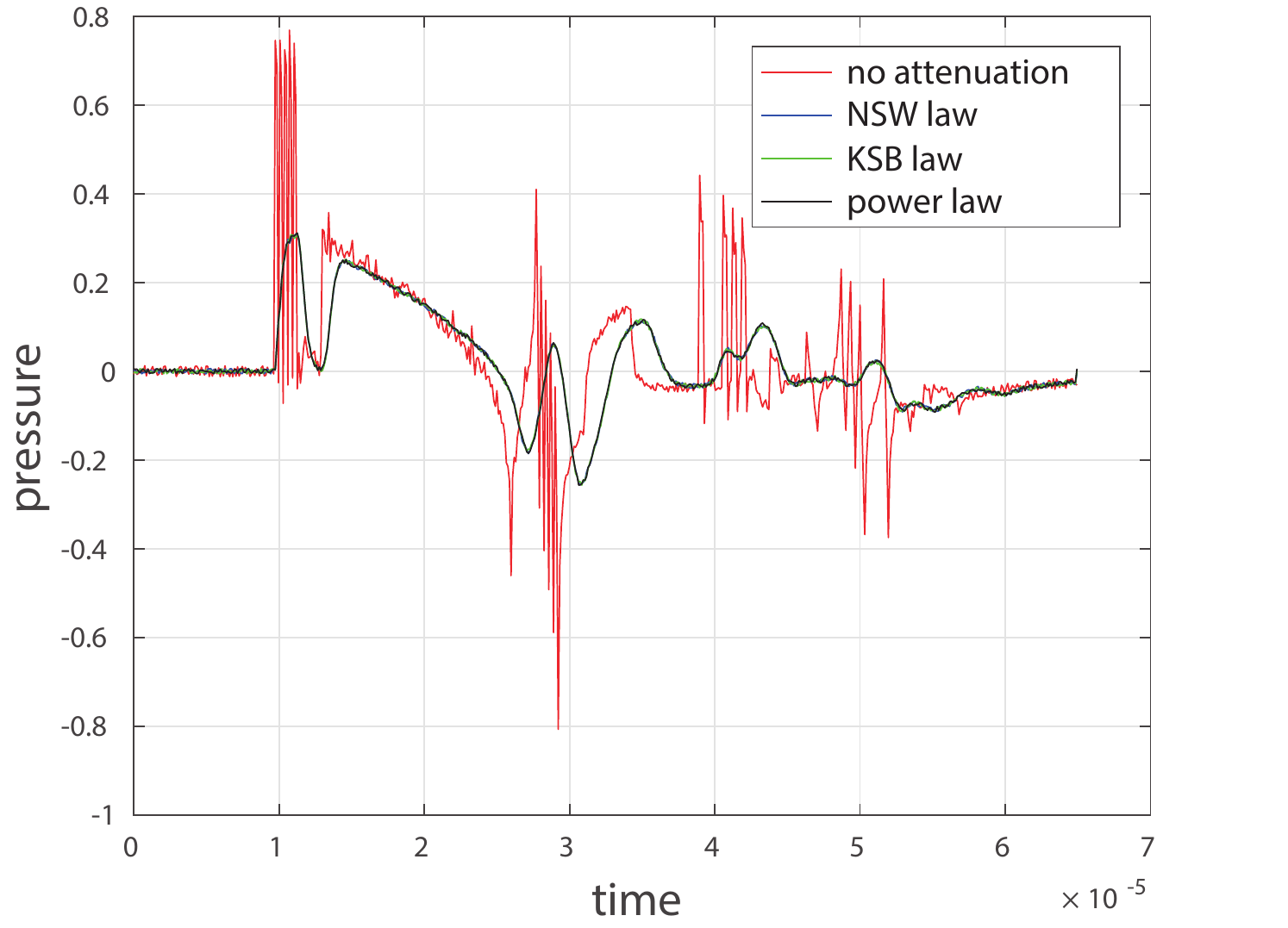}
\caption{\textsc{Simulated un-attenuated
\label{fig:data} and attenuated pressure data.}
Left: Weak attenuation case  $\tau_1 = \unit[100]{ns}$.
Right:  Strong attenuation case $\tau_1 = \unit[1]{ns}$.}
\end{figure}

In Figure~\ref{fig:data}, we compare the (noisy) un-attenuated pressure
data measured $p_0(\x, \edot )$ at location  $\x= (\RR,0)$ with  (noisy) attenuated pressure data $p_\al(\x, \edot )$ according to the NSW model  for the phantom shown in Figure~\ref{fig:strong}. We also  compare it to the pressure data obtained with the KSB model and the power law with exponent $\gamma=2$. The parameter settings of the KSB and the power law models have been chosen such that the real and imaginary parts of the complex attenuation laws are as close as possible to the one of the NSW law for small frequencies.
For the strong attenuation case (Figure~\ref{fig:data}, right), we see that all attenuated pressure data are very similar. Indeed, if we simulate noisy data via the NSW model and then estimate the initial data via the power law, the KSB law or the NSW law, then the results would  hardly be distinguishable. However, the left picture in Figure~\ref{fig:data} shows that this is not true in the small attenuation case where the different attenuation laws yield quite different attenuated pressure signals.
Note that for the power law we actually implemented a causal form, where we
have truncated $m_\alpha(t,r ) $ for $r>t$ after evaluating~\eqref{eq:M2}.

\subsection{Reconstruction results for strong attenuation}

The numerical simulations that we present in this subsection correspond to strong attenuation with a relaxation time $\tau_1 =  \unit[100]{ns}$. The radius of the region of interest is taken as $\RR = \unit[5]{cm}$
and we take $N_x = N_t = N_\ph=600$.

\begin{figure}[htb!] \centering
\includegraphics[width=0.495\textwidth]{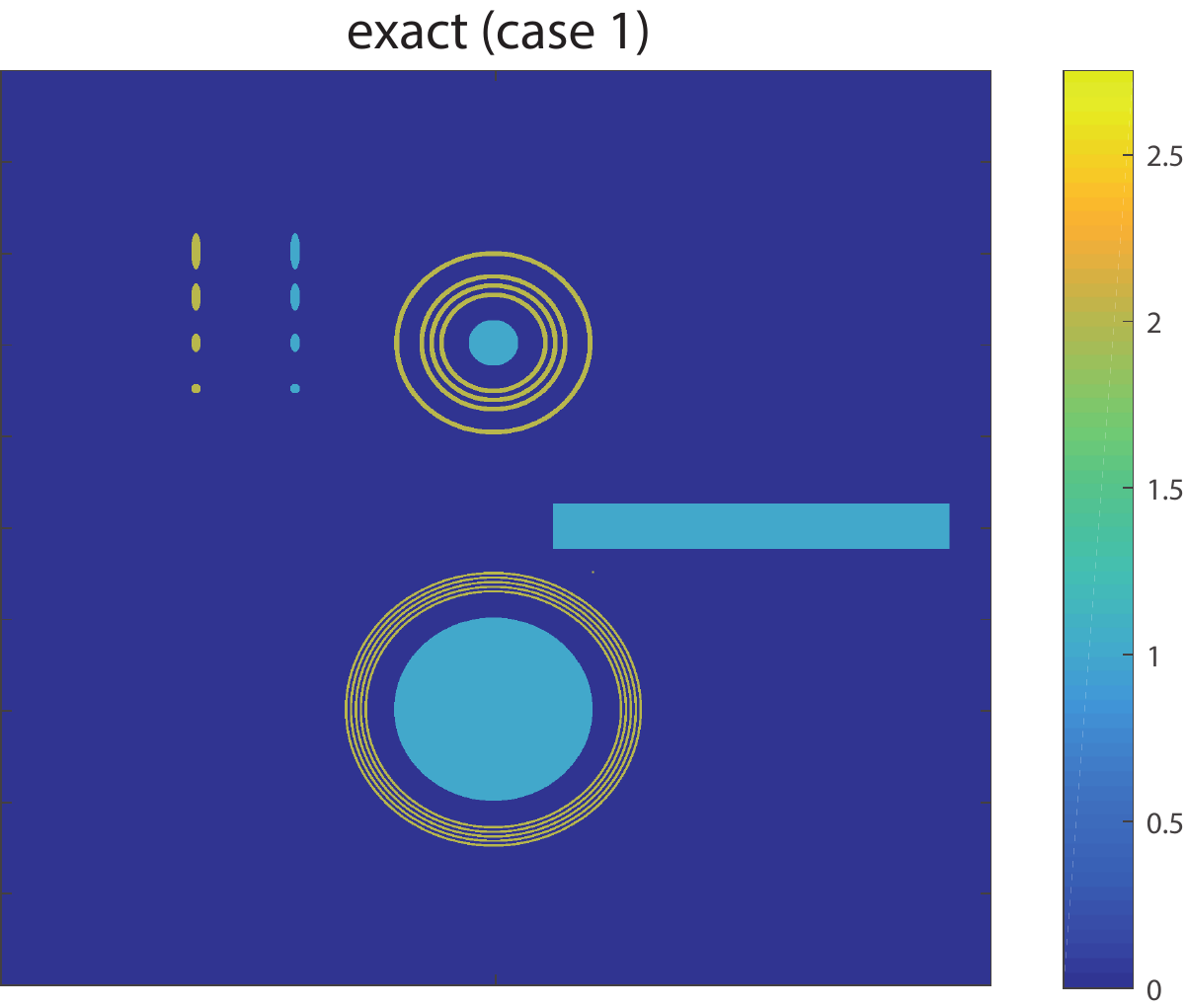}
\includegraphics[width=0.495\textwidth]{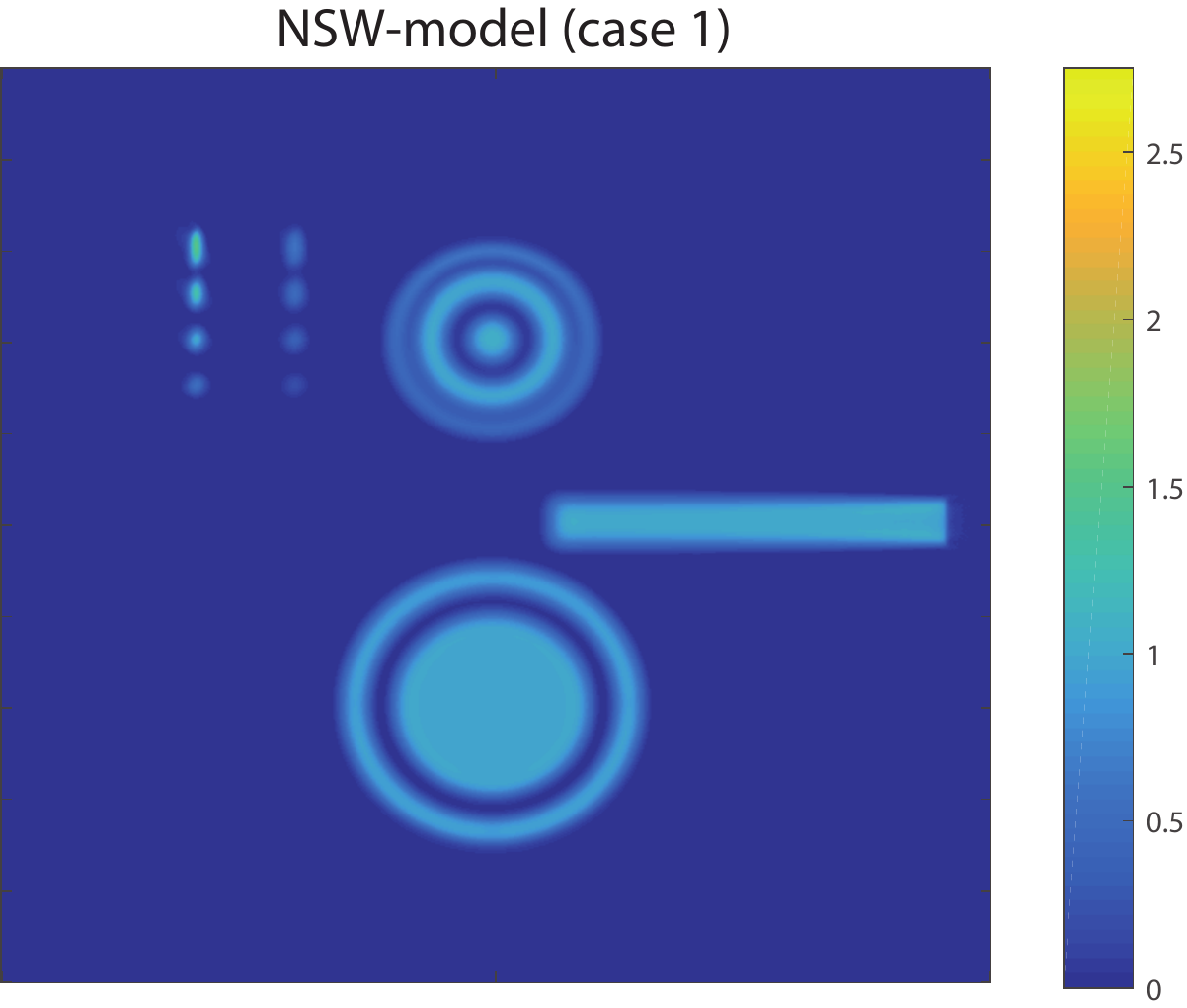}\\
\includegraphics[width=0.495\textwidth]{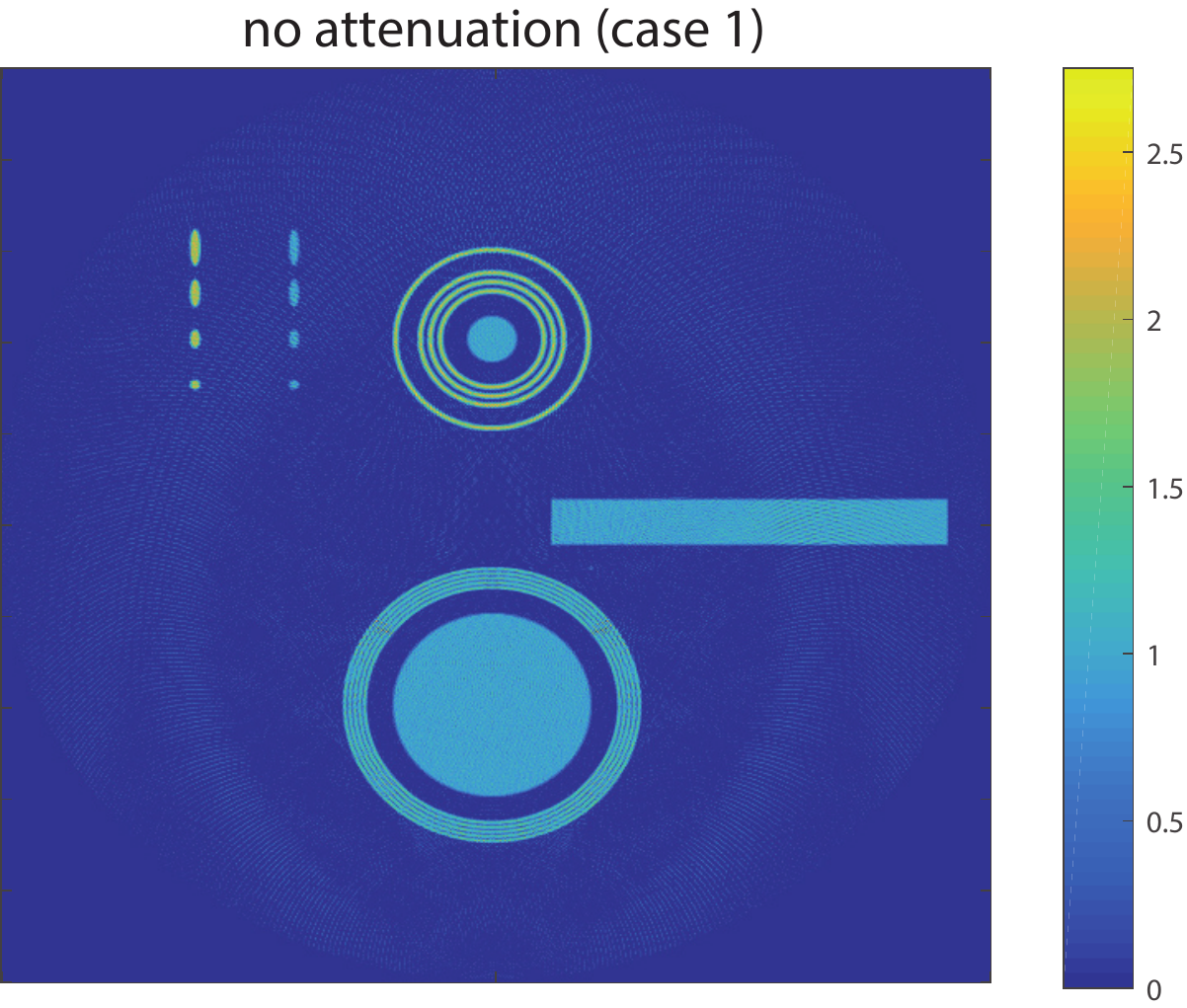}
\includegraphics[width=0.495\textwidth]{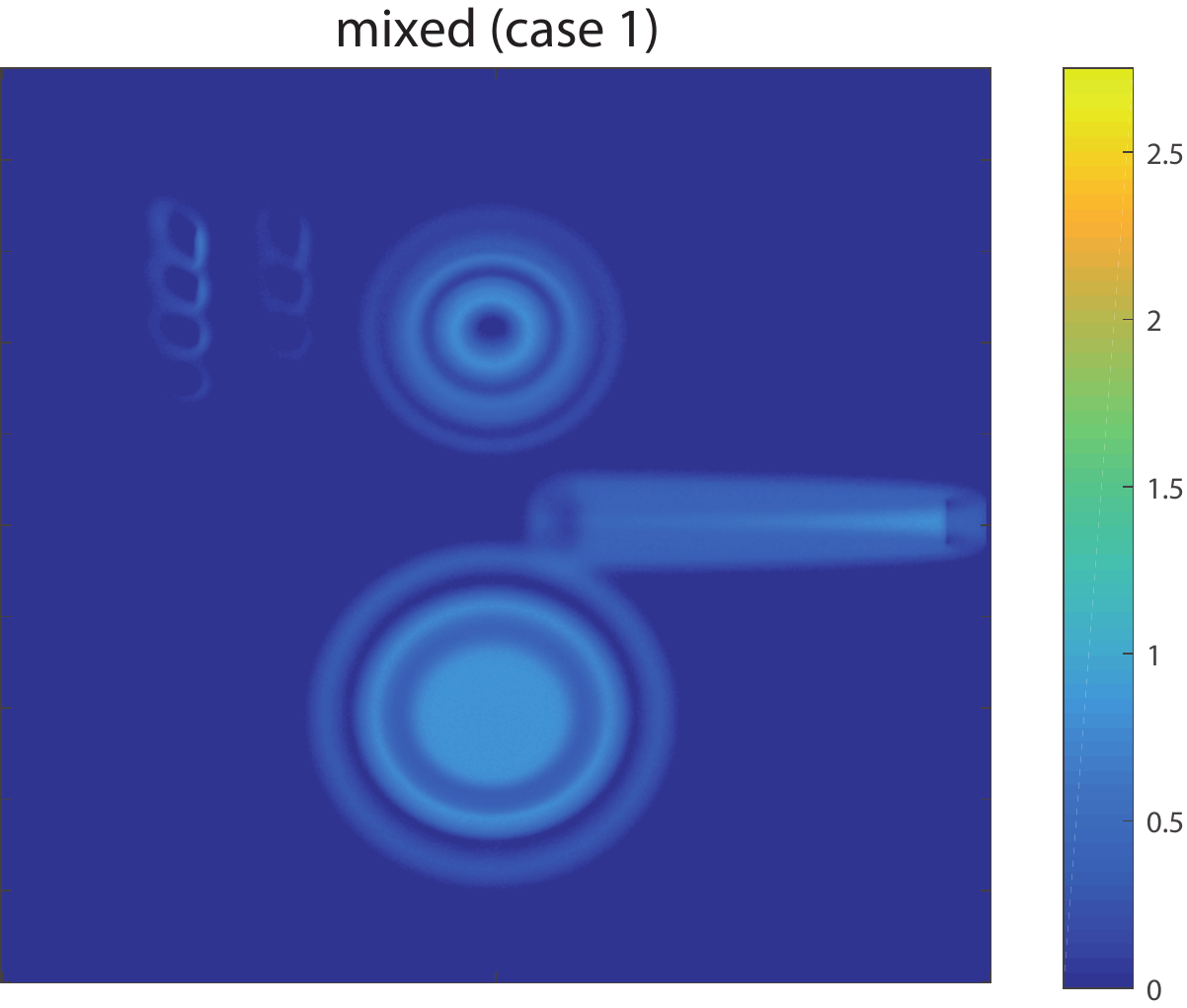}
\caption{\textsc{Reconstructions in the strong attenuation case  $\tau_1 = \unit[100]{ns}$.}
Top left: Exact PA source. Top right:  Reconstruction based on the
NSW model. Bottom left: Reconstruction in the absence of attenuation. Bottom right: Reconstruction from attenuated data  but neglecting
attenuation in the reconstruction.}
\label{fig:strong}
\end{figure}

The numerical phantom  and the numerical results using the projected Landweber iteration with and without attenuation are presented in
Figure~\ref{fig:strong}. We see that the reconstructions using the NSW model (top right)
yields a smoother results  than in the absence of attenuation (bottom left).
In the case with attenuation the thin concentric annuli cannot be resolved, they appear as single thick blurred annulus. Moreover, small or thin structures are blurred and provide less contrast  in the case of attenuation. We also applied the  projected Landweber iteration using the un-attenuated  wave equation to the attenuated data. The reconstruction shown in the bottom right image in Figure~\ref{fig:strong} indicates that thin and long structures are strongly blurred and displaced. Actually, details with small diameter cannot be estimated reliably which
clearly indicates that attenuation has to be taken into account.
This also reflects  that attenuated data are not only smoothed but also displaced. The artifacts in the mixed reconstruction might be reduced by shifting the pressure data  appropriately. Indeed, heuristic rules  performing such a shift are often applied in practice.  However, as the shift depends on the location  and the frequency content of the object  applying a reasonable shift
seems a non-trivial issue that is not required at all in our approach.

\begin{figure}[htb!]
\includegraphics[width=0.5\textwidth]{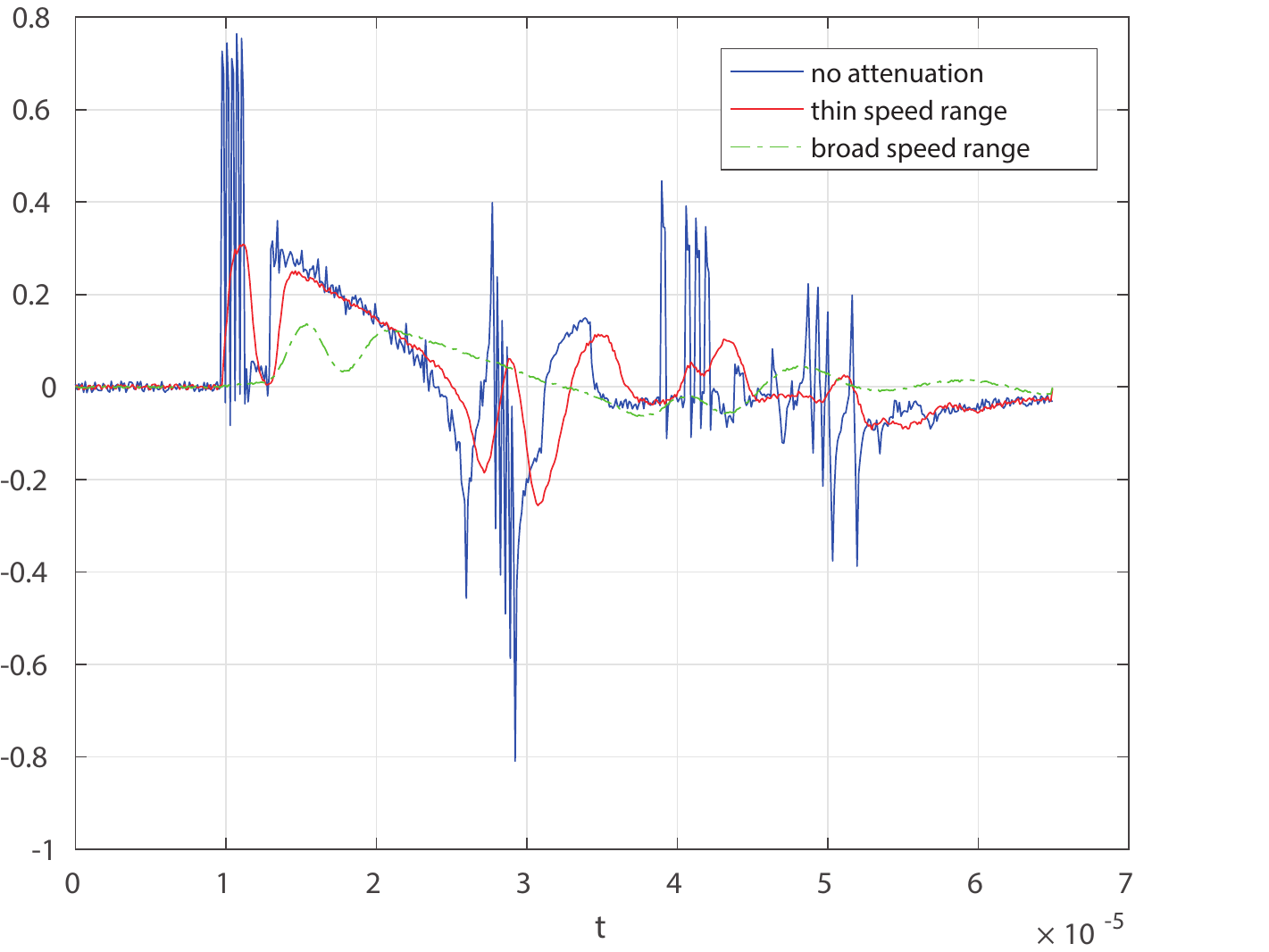}
\includegraphics[width=0.49\textwidth]{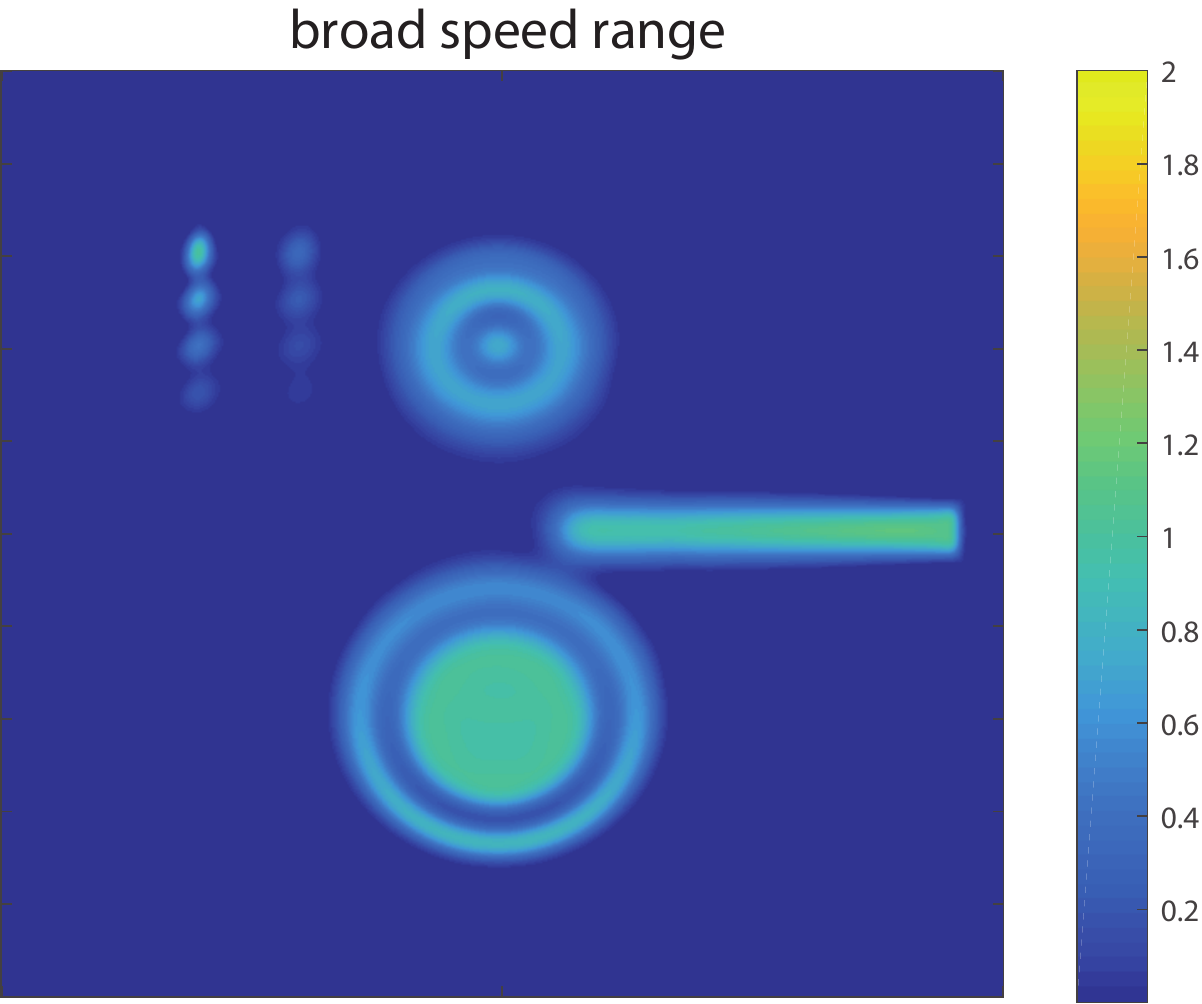}
\caption{\textsc{Effects of increasing the speed range $[c_0, c_\infty]$.}
Left: Noisy un-attenuated pressure and attenuated pressure  for
$c_\infty = \unit[1623]{m/s}$ and $c_\infty =\unit[3080]{m/s} $, respectively.
Right: Reconstruction for  $c_0 = \unit[1540]{m/s}$ and $c_\infty = \unit[3080]{m/s}$. (The reconstruction for
$c_0 = \unit[1540]{m/s}$ and $c_\infty = \unit[1623]{m/s}$ is shown in  the top
right image in  Figure~\ref{fig:strong}.)}
\label{fig:largecinf}
\end{figure}

We  are not aware how to exactly choose the free parameters
$c_0$ and $c_\infty$  in order to accurately model acoustic attenuation in water or soft tissue.
To investigate the effect of changing these parameters, we also perform simulations with a significantly increased  value of $c_\infty = \unit[3080]{m/s} $. From the results showing  in Figure~\ref{fig:largecinf}, one observes significantly increased attenuation compared to the value
$c_\infty = \unit[1623]{m/s}$ (see top right image in  Figure~\ref{fig:strong}).

\begin{figure}[htb!]
\includegraphics[width=0.495\textwidth]{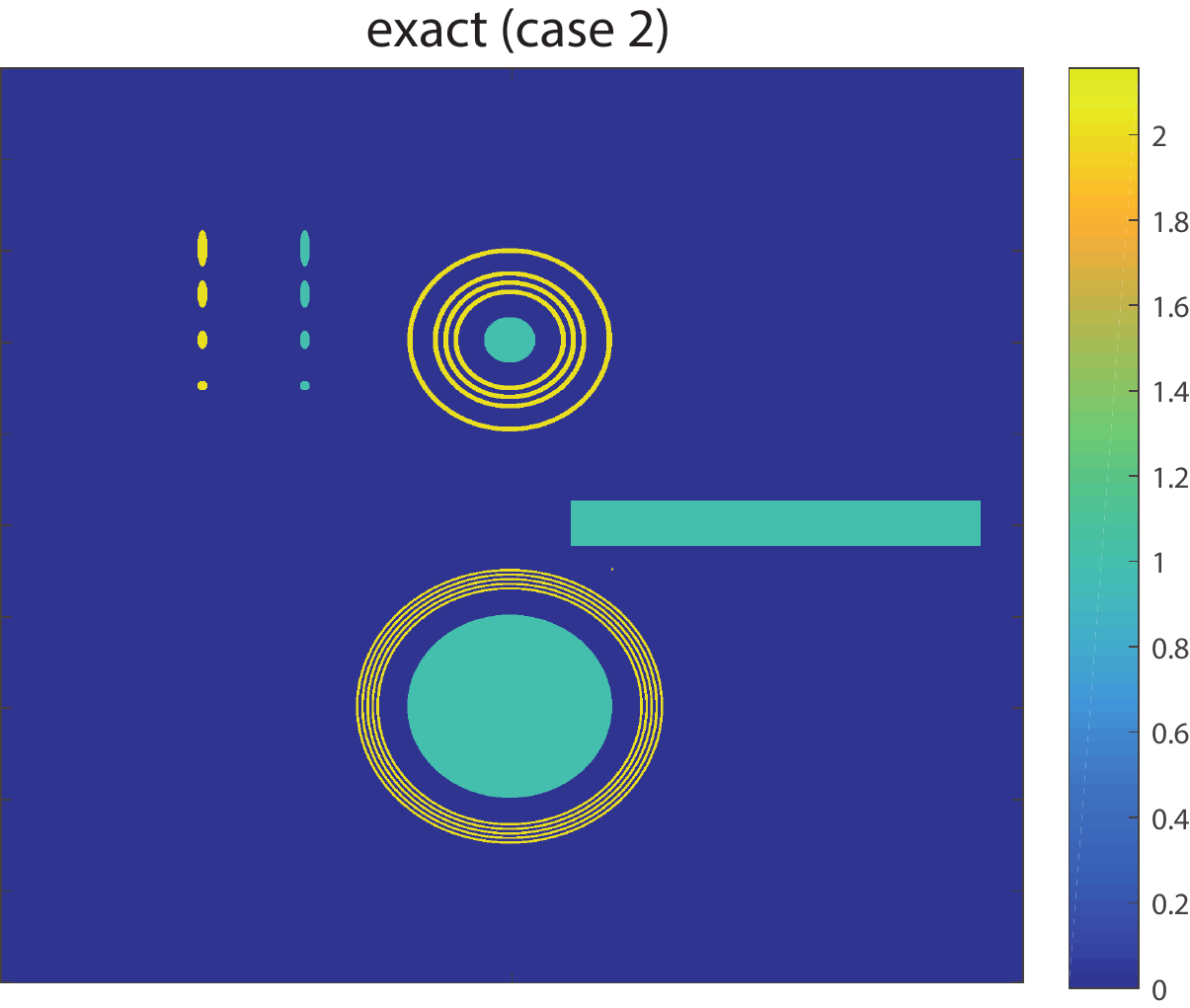}
\includegraphics[width=0.495\textwidth]{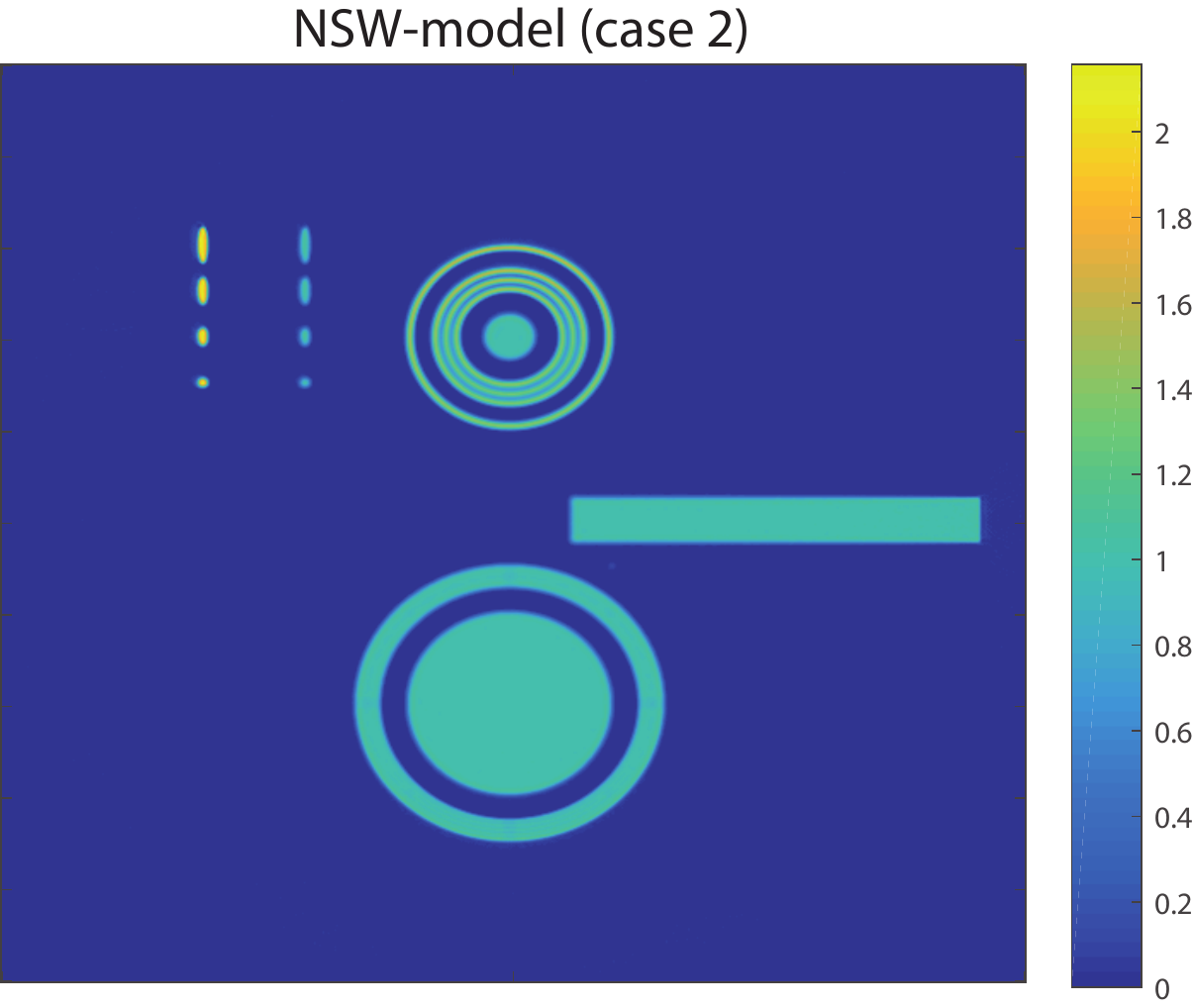}\\
\includegraphics[width=0.495\textwidth]{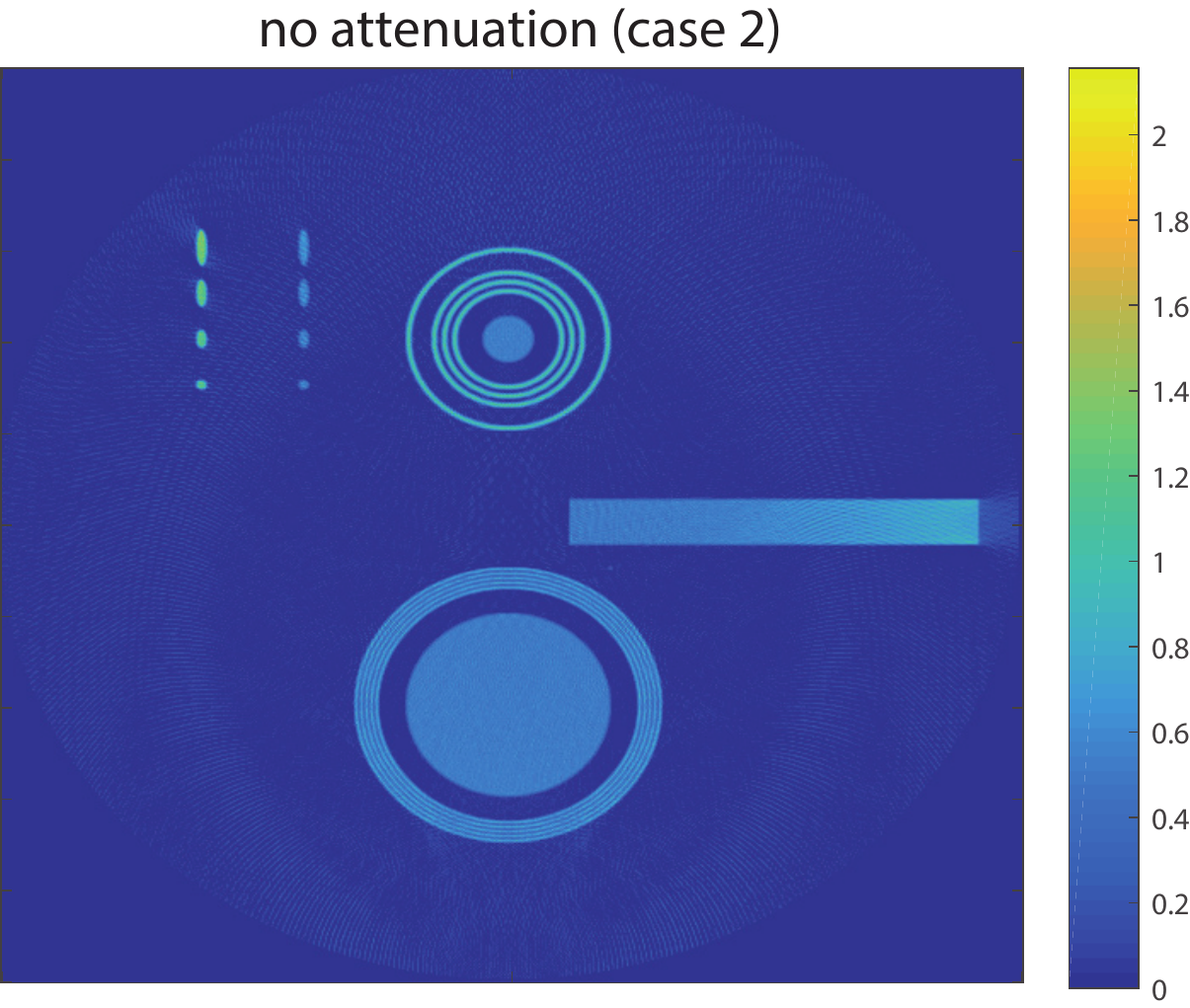}
\includegraphics[width=0.495\textwidth]{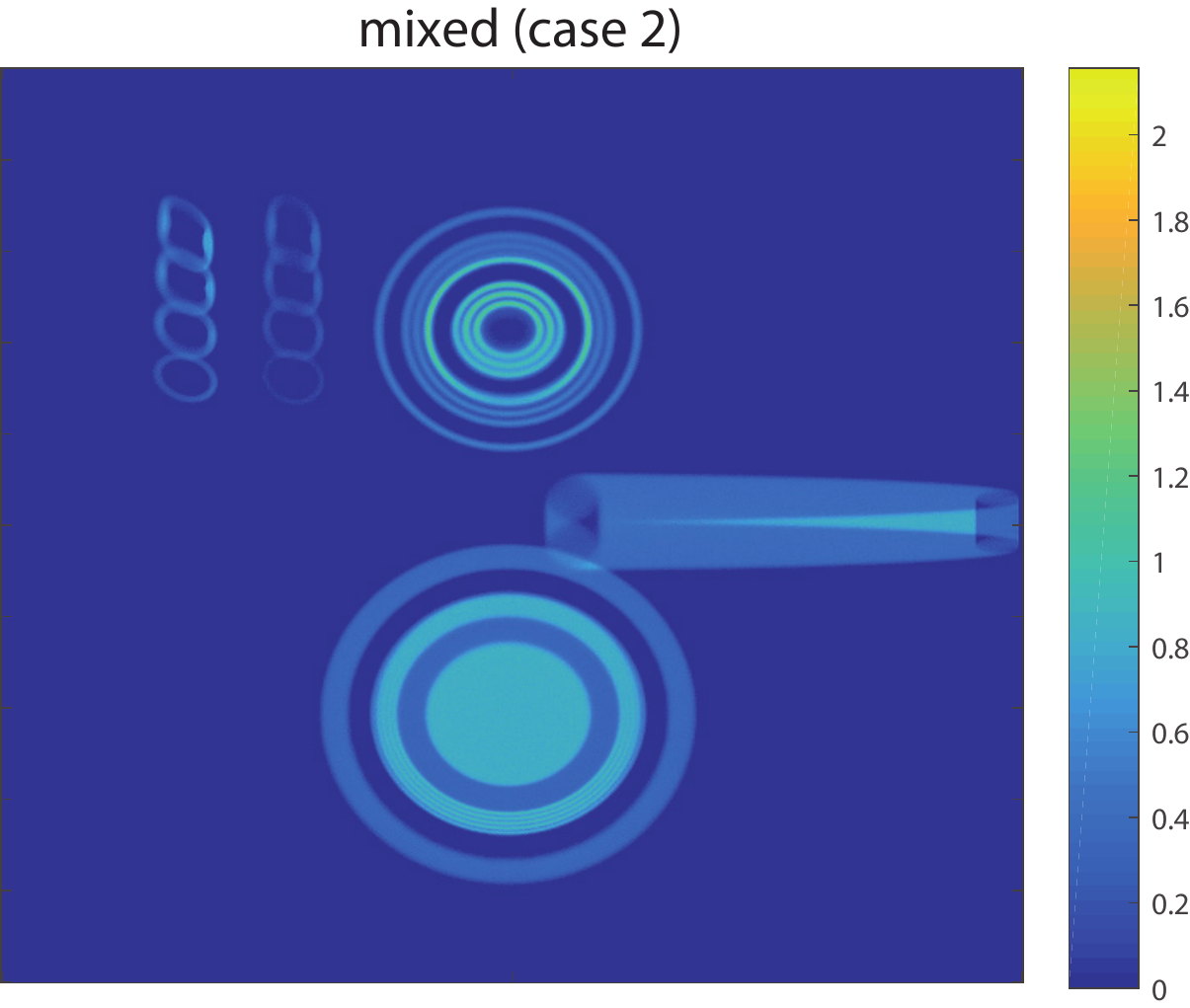}
\caption{\textsc{Reconstructions in the weak attenuation case $\tau_1 = \unit[1]{ns}$.}
Top left:  Exact PA source $\hnum$. Top right: Reconstruction based on the NSW model. Bottom left:  Reconstruction  in the absence of attenuation.
Bottom right: Reconstruction from attenuated data  but neglecting
attenuation in the reconstruction.}
\label{fig:weak}
\end{figure}

\subsection{Reconstruction results for weak attenuation}

Now we present simulations of the NSW model for weak attenuation
case with  relaxation time $\tau_1 = \unit[1]{ns} $.
As a consequence, we have to use a finer time discretization for
calculating   $\mnum_\alpha$ in \eqref{eq:Mn2}, \eqref{eq:Mn3}.
In order to keep the computational expenses reasonable,
we decreased the radius to $\RR =  \unit[5]{mm}$.
From the numerical results presented in Figure~\ref{fig:weak}, we
see that the attenuated case again yields a smoother reconstructions
than in the absence of attenuation. In contrast to the strong attenuation case, the very thin concentric annuli located in
the upper half of the image of $f$ can still be resolved; the contrast now even seems  better than in the dissipation free case.
Also, the very small elliptic structures can be estimated with high quality.  The thinner concentric annuli located in the lower half  of the image of $f$ cannot be resolved. As in the case of strong attenuation, if the  standard wave reconstruction  is applied attenuated data, then the reconstruction of thin and long structures is blurred and displaced.

\begin{figure}[htb!]
\includegraphics[width=0.495\textwidth]{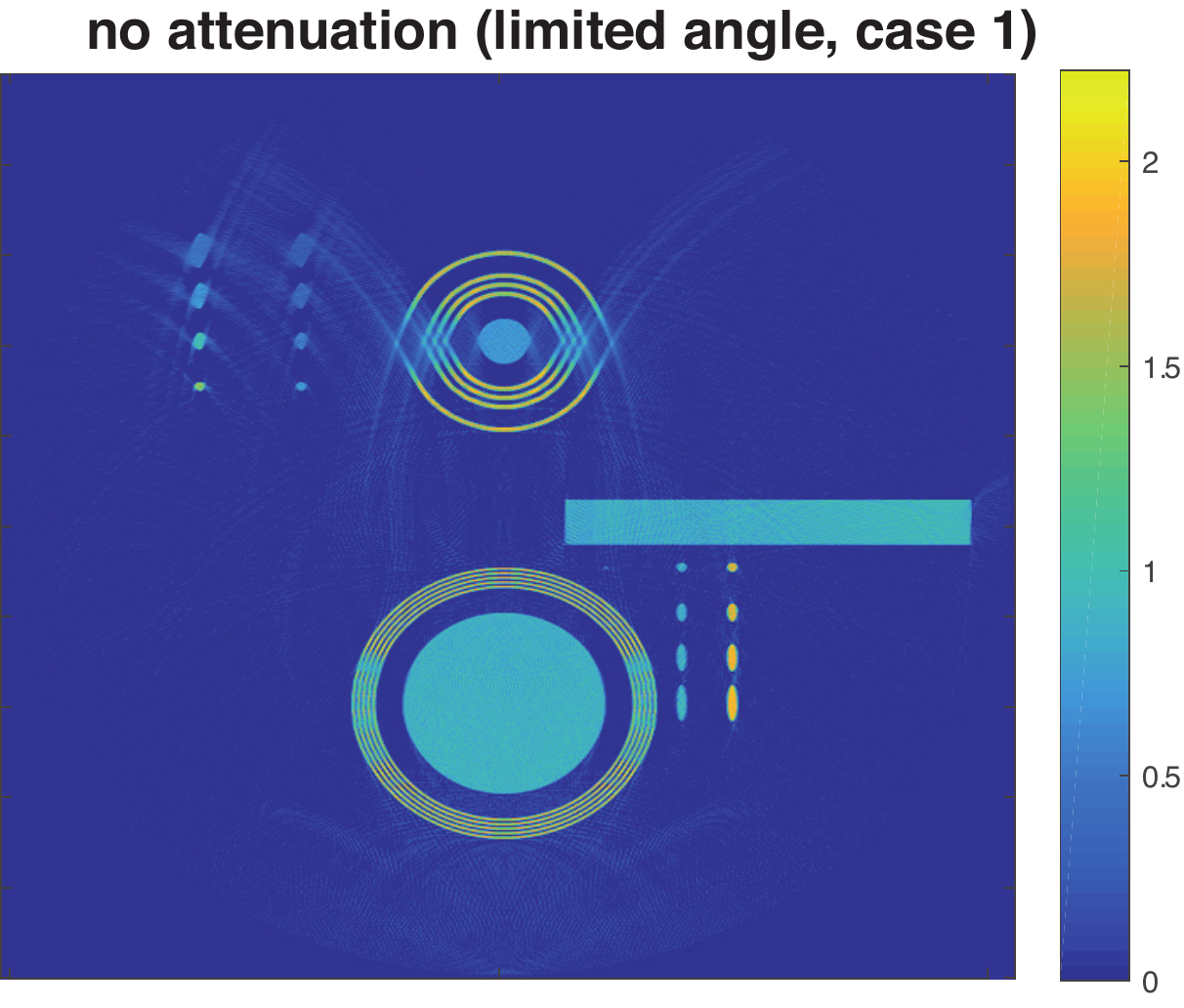}
\includegraphics[width=0.495\textwidth]{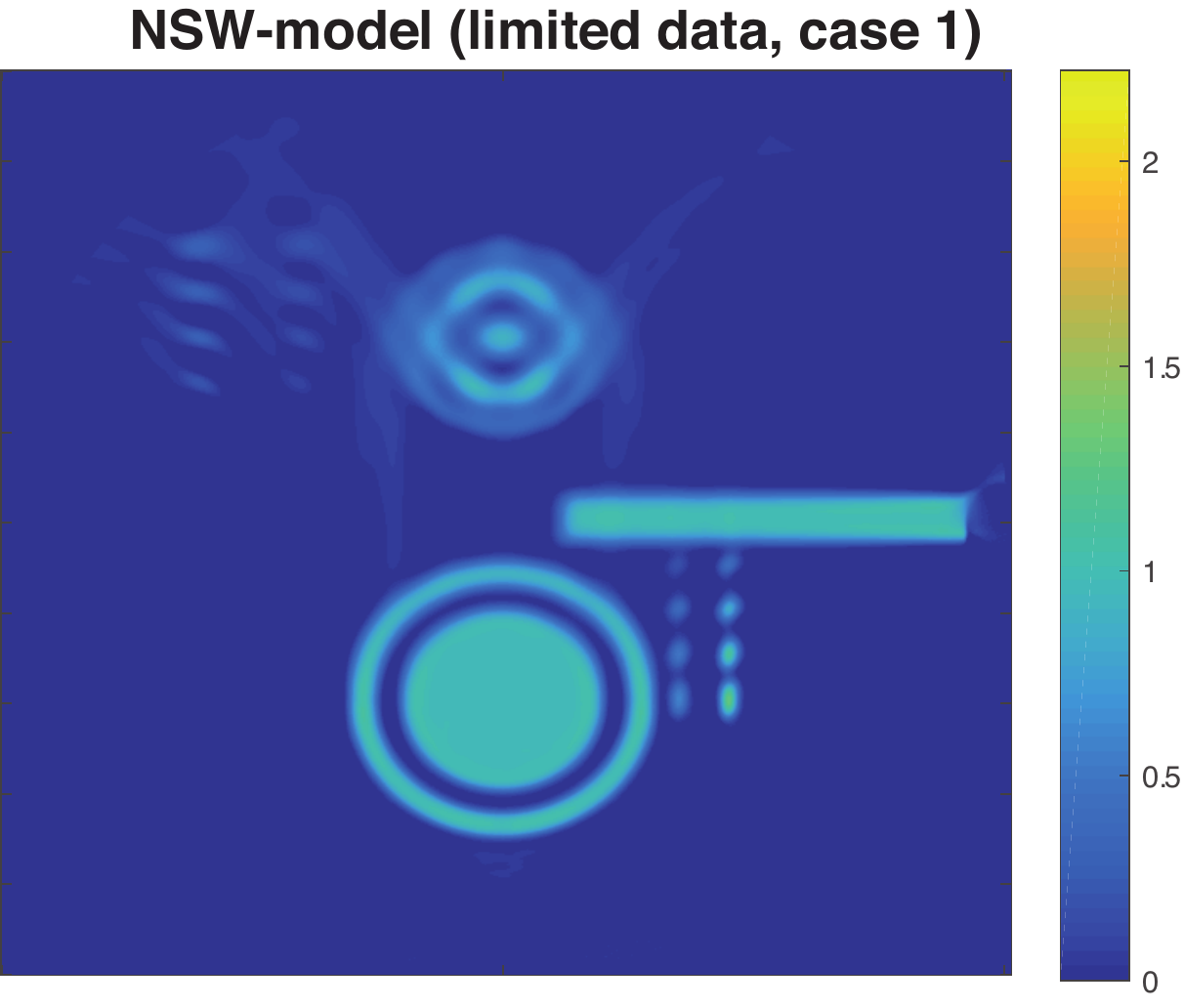}\\
\includegraphics[width=0.495\textwidth]{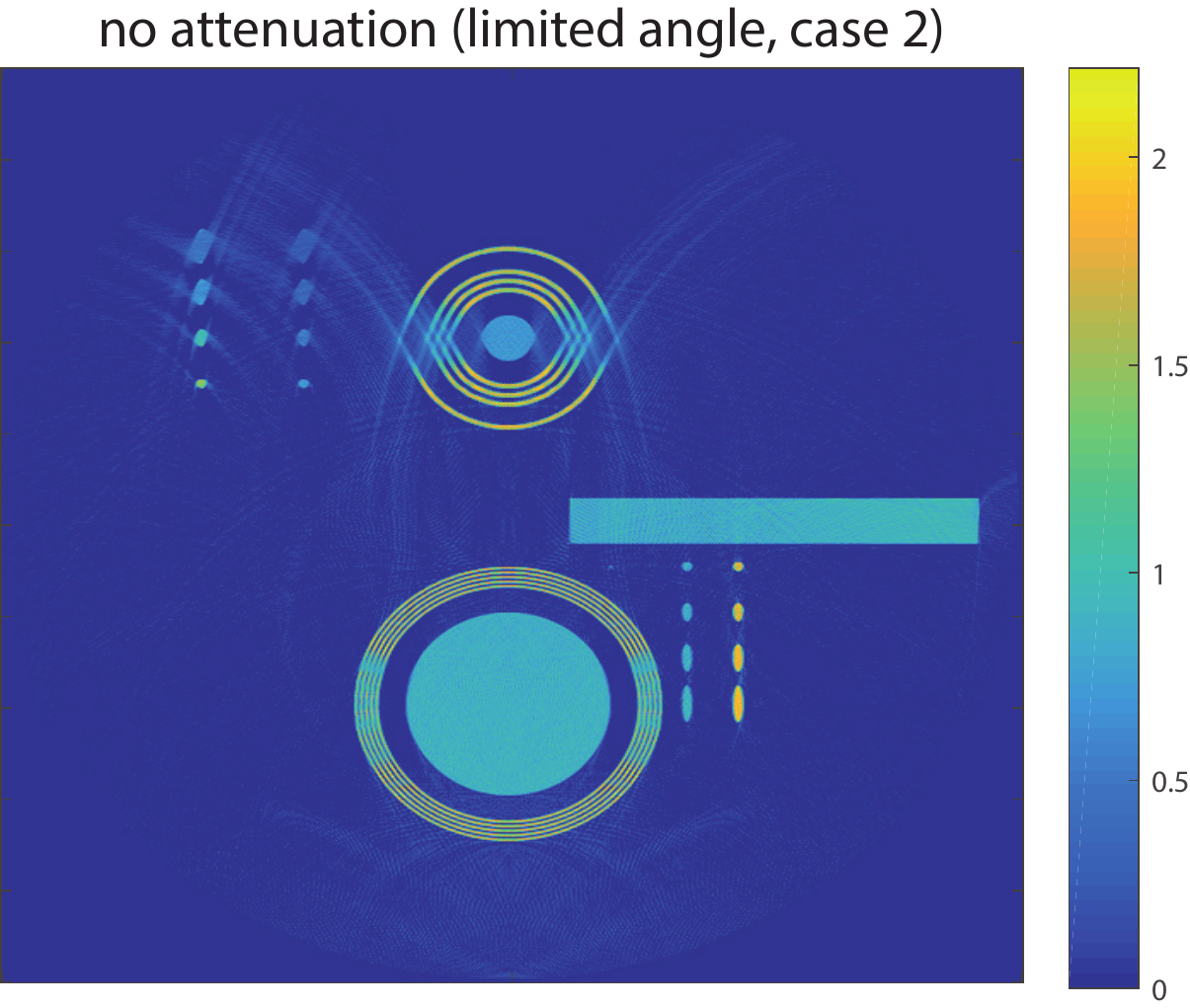}
\includegraphics[width=0.495\textwidth]{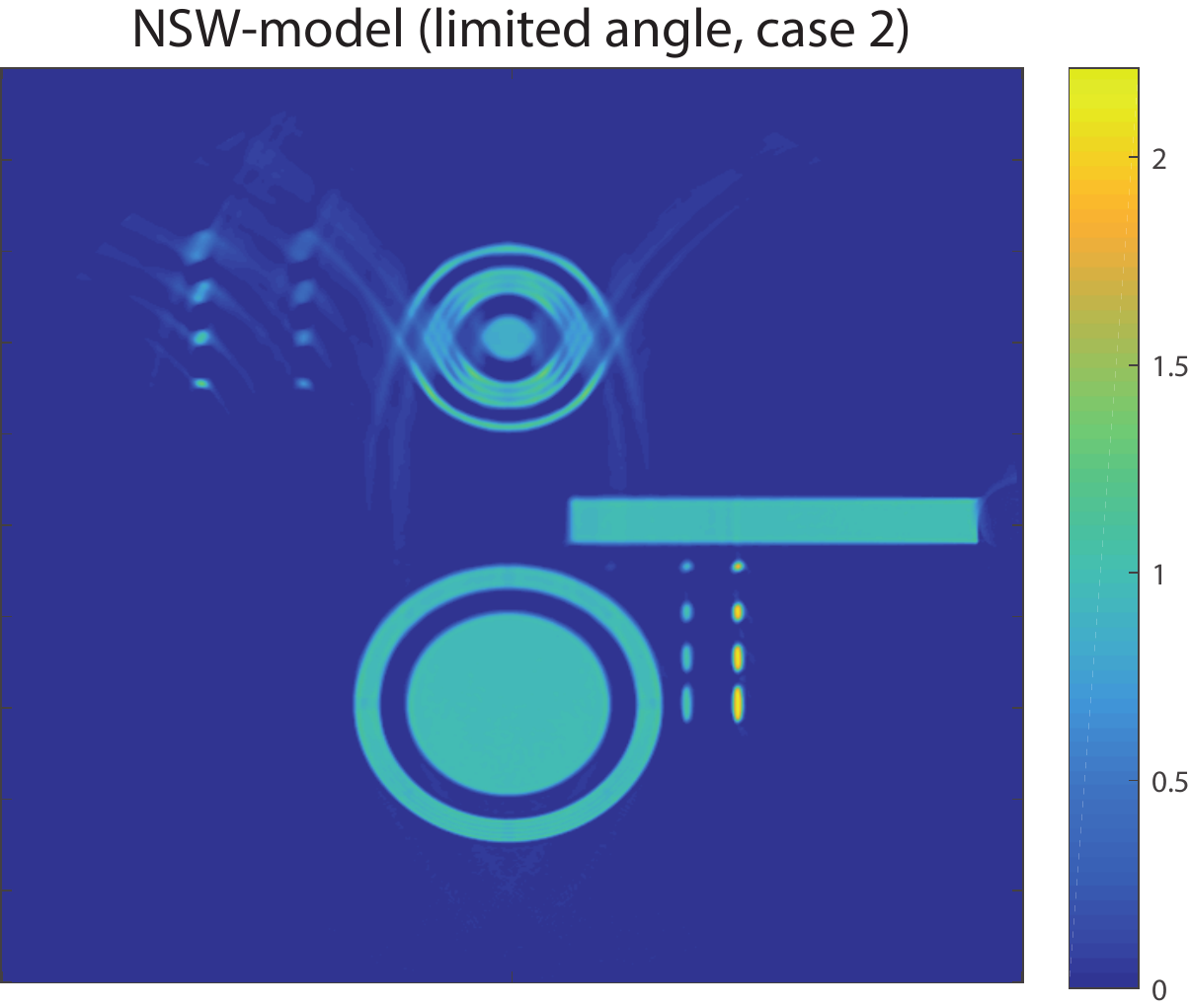}
\caption{\textsc{Reconstructions from limited view data}. Top row shows the reconstruction in the strong attenuation case using attenuation
free data (left)  attenuated data (right).
The bottom row shows the same for the  weak  attenuation case.}
\label{fig:limited}
\end{figure}

\subsection{Reconstruction results for limited view data}

Finally, we perform reconstructions using limited view (or limited angle) data, where the detector positions $\x= \RR(\cos \ph, \sin \ph)$ are located on a half circle with  $\ph \in [0,\pi]$. The reconstruction results using the projected Landweber method are shown in Figure~\ref{fig:limited}. The top row considers the  the strong attenuation  case and the bottom row the weak attenuation case. In both cases, the result are compared to the cases without attenuation. All reconstructions show the typical limited view artefacts, even in the absence of attenuation. Further, one notices that the  results using un-attenuated   data yield a little better contrast for long thin structure and much better contrast for structures with small diameters than the ones with attenuated data.
Again, we see if the attenuation is not too strong, then attenuation leads to  smoother images
and even partly better results  than in the absence of attenuation.

\section{Conclusion}
\label{sec:conclusion}

In this paper, we developed iterative regularization methods for PAT in attenuating media.
This comes with a clear   convergence theory in the Hilbert space framework that is not shared by
any other existing approach. For the sake of clarity, we focused on the  Landweber method.
Generalizations to other regularization techniques such the CG method or Tikhonov regularization
are subject of future research.    A main ingredient of these regularization methods is the
evaluation adjoint of the forward operator. For that purpose, we developed two formulations for the adjoint:
One takes the form of an explicit formula whereas the second one involves the solution  of an adjoining wave equation.
While the proposed method can equally
be applied  for general admissible attenuation models, in or numerical numerical results, we focused on the widely
accepted attenuation model  of Nachman, Smith and Waag \cite{nachman1990equation}.  A detailed comparison
of  reconstructions with different attenuation laws  and different  reconstruction algorithms is intended for future research.
The presented numerical results  clearly demonstrate that for moderate attenuation even small structures are estimated well
with our method.   On the other hand, our results show that not  accounting for attenuation yields severe
artifacts due to dispersion.  This clearly demonstrated the necessity of   taking correct attenuation models
into account in the inversion process. Moreover, our numerical experiments indicate that  for weak attenuation
the results are even better than in the absence of attenuation.

\section*{Acknowledgement}
L.V. Nguyen's research is partially supported by the NSF grants DMS 1212125 and DMS 1616904.
He also thanks the University of Innsbruck  for financial support and hospitality during his visit.

\appendix

\section{Adjoint attenuated wave equation}
\label{ap:proof}

Let $\Omout$ be an open set such that $\partial\Omout$
is a closed smooth surface in $\R^d$. For notational convenience, we will denote $\Omout_c \coloneqq  \R^d \setminus \overline \Omout$.
For $g \in C_0^\infty(\partial \Omout  \times \R) $ consider the  equation
\begin{linenomath} \begin{equation}  \label{eq:adjoint} \left\{
\begin{aligned}
&\kl{ \Do_\al  +
\frac{1}{c_0}\frac{\partial}{\partial t }  }^2  u(x,t)
-
\Delta u(x,t)
=  - \delta_{\partial \Omout}(x) \, g(x,t)
&&\text{ on } \R^d \times \R \,, \\
& \quad u (x,t) = 0  &&  \text{ for } t \ll 0   \,.
\end{aligned}
\right. \end{equation}\end{linenomath}
Here the notation $u (\edot ,t) = 0$  for  $t \ll 0 $ means that there exists some $t_0 \in (-\infty, 0]$
such that $u (\edot ,t) = 0$ for all $t  < t_0$. In this appendix, we show regularity of
solutions of \eqref{eq:adjoint} and demonstrate that its solution defines the adjoint of $\Wo_\al$.

\subsection{Regularity and  classical solution of~\eqref{eq:adjoint}} \label{app:existence}

For any $g \in C_0^\infty(\partial \Omout  \times \R) $,  the source  term  $- \delta_{\partial\Omout}(x) \, g(x,t)$ is a tempered distribution that vanishes for sufficiently small $t$. Hence~\eqref{eq:adjoint}
has a unique distributional solution
\begin{linenomath} \begin{equation}  \label{eq:single-space}
	u(x,t) = \int_\R \int_{\partial\Omout} G_\al(x-y,t-\tau)  \, g(y,\tau) \, \ds(y) \, d\tau \,,
\end{equation}\end{linenomath}
where  $G_\al  \in \sr'(\R^d \times \R)$ is the causal Greens function of  the attenuated wave
equation~\eqref{eq:w}.
As first  result in this section we show  that  the restrictions of the
 solution  to $\Omout$ and $\Omout_c$  are smooth and both
 can be smoothly extended  to $\partial \Omout$.

\begin{theorem}[Regularity of solutions of \eqref{eq:adjoint}]
For \label{thm:regularity} any $g \in C_0^\infty(\partial \Omout  \times \R) $,
\eqref{eq:adjoint} has a unique  solution $u \in C^\infty((\R^d \setminus \partial \Omout )\times \R) $. Further, $u$ can be  extended
continuously to $\R^d \times \R$, and $\rest{\nabla u}{\Omout}$ and $\rest{\nabla u}{\Omout_c}$ can be extended continuously to $\overline \Omout\times \R$ and $\overline{ \Omout_c} \times \R$, respectively.
 \end{theorem}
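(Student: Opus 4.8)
The plan is to analyze the single-layer potential representation~\eqref{eq:single-space} directly, reducing the regularity question to properties of the Greens function $G_\al$. The starting point is the factorization from Lemma~\ref{lem:Malpha}: the attenuated pressure is obtained from the un-attenuated one via the smooth kernel $m_\al$, i.e. convolution in time with $G_\al$ factors (in an appropriate sense) through convolution with the classical free-space Greens function $G_0$ composed with $\Mo_\al$. More precisely, writing $u_0$ for the solution of the \emph{un-attenuated} single-layer problem (the classical wave equation with source $-\delta_{\partial\Omout}g$), one has $u(\edot,t)=\int_0^t m_\al(t,r)\,u_0(\edot,r)\,dr$ pointwise in $x$, by the same argument that proves~\eqref{eq:Mpalpha} applied to each point source on $\partial\Omout$. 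Since $m_\al$ is $C^\infty$ away from the origin and $u_0$ is known (classically) to be $C^\infty$ on $(\R^d\setminus\partial\Omout)\times\R$ with one-sided smooth extensions of $u_0$ and $\nabla u_0$ up to $\partial\Omout$ from either side, the time-integral operator preserves all of these properties: $C^\infty$ in $x$ on each side is inherited because differentiation in $x$ passes under the $r$-integral, and continuity/smooth extension up to the interface is inherited because $m_\al(t,r)$ is a fixed smooth weight and the $r$-integration is over a compact interval.

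The key steps, in order, would be: (i) record the classical regularity of the un-attenuated single-layer potential $u_0$ — namely $u_0\in C^\infty((\R^d\setminus\partial\Omout)\times\R)$, $u_0$ extends continuously across $\partial\Omout$ (the single layer is continuous), and $\nabla u_0$ has continuous one-sided extensions to $\overline{\Omout}\times\R$ and $\overline{\Omout_c}\times\R$ (with the expected jump in the normal derivative equal to $g$); this is standard for the wave equation and I would cite it or sketch it via the explicit Kirchhoff/Poisson formulas in the relevant dimension; (ii) establish the pointwise identity $u(x,t)=\int_0^t m_\al(t,r)\,u_0(x,r)\,dr$ for $x\notin\partial\Omout$, which follows from~\eqref{eq:single-space}, the Greens-function formula~\eqref{eq:green}, and Lemma~\ref{lem:Malpha} applied to the spherical wave from each $y\in\partial\Omout$, together with Fubini; (iii) differentiate under the integral sign in $x$ arbitrarily often to get $u\in C^\infty$ on each side, using that $m_\al(t,\cdot)\in L^1_{\mathrm{loc}}$ and that the $x$-derivatives of $u_0$ are locally bounded uniformly in $r$ on the relevant compact sets; (iv) pass to the limit $x\to\partial\Omout$ from either side under the $r$-integral — dominated convergence applies because $u_0(\edot,r)$ and $\rest{\nabla u_0}{\Omout}(\edot,r)$, $\rest{\nabla u_0}{\Omout_c}(\edot,r)$ are continuous up to the boundary and locally bounded in $r$ — to obtain the claimed continuous extensions of $u$ and $\rest{\nabla u}{\Omout}$, $\rest{\nabla u}{\Omout_c}$; (v) uniqueness is already granted by the discussion following~\eqref{eq:w} (uniqueness of causal solutions), so no extra work is needed there.

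The main obstacle is the behavior near the diagonal singularity of $m_\al$ and, more subtly, the interaction between that singularity and the conical singularity of $u_0$ along the light cone emanating from $\partial\Omout$. At the time $r=t$ the kernel $m_\al(t,r)$ is only guaranteed smooth away from $(0,0)$, and $u_0(\edot,r)$ itself is only a distribution that happens to be smooth off the spatial interface; one must check that the product $m_\al(t,r)u_0(x,r)$ is genuinely integrable in $r$ near $r=t$ for fixed $x\notin\partial\Omout$, which requires controlling the local integrability of $r\mapsto u_0(x,r)$ (it is in fact continuous there for $x$ off the interface, so this is fine, but it needs to be said). A cleaner route around this, which I would adopt if the direct estimates get unwieldy, is to avoid the endpoint entirely: split the $r$-integral as $\int_0^{t-\eps}+\int_{t-\eps}^t$, handle the first piece by the smooth-weight argument above, and absorb the second piece using the explicit small-$r$... rather, small-$(t-r)$ asymptotics of $m_\al$ recorded in~\cite{kowar2010integral,kowar2012photoacoustic} (the kernel has an integrable, indeed bounded, behavior there for the strongly causal models, and a mild integrable singularity in general), together with the continuity of $u_0$ near the non-characteristic points. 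I expect the bookkeeping for the one-sided gradient extensions — making sure the normal-derivative jump of $u$ is the $m_\al$-smoothed version of $g$ and that no spurious boundary term appears — to be the most delicate part of the write-up, but conceptually it is just differentiation under the integral plus dominated convergence.
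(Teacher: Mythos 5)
Your reduction collapses at step (ii), and everything after it depends on that step. The identity $u(x,t)=\int_0^t m_\al(t,r)\,u_0(x,r)\,\rmd r$ is not a consequence of Lemma~\ref{lem:Malpha}: that lemma (and formula \eqref{eq:Mpalpha}) is specific to the impulsive photoacoustic source $\delta'(t)h(x)$, and it cannot be applied ``to each point source on $\partial\Omout$'' because each boundary point in \eqref{eq:adjoint} carries the full time profile $g(y,\edot)$, not a $\delta'$. The kernel $m_\al(t,r)$ is not a function of $t-r$, so $\Mo_\al$ is not a time convolution; in the frequency domain one has $\fourier_t\kl{\Mo_\al f}(\om)=\frac{\om}{k(\om)}\int_0^\infty e^{\imi k(\om)r}f(r)\,\rmd r$ with $k(\om)=\om/c_0+\imi\al(\om)$, i.e.\ $\Mo_\al$ evaluates the Fourier--Laplace transform of its input at the shifted complex frequency $c_0k(\om)$ (up to normalization) and multiplies by $\om/k(\om)$. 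For $p_0$ generated by $\delta'(t)h(x)$, whose time profile at each $x$ is a superposition of $\delta'\kl{\edot-\abs{x-y}/c_0}$ terms, this operation exactly produces the attenuated Green's function factor, which is why \eqref{eq:Mpalpha} holds. Applied instead to the retarded single-layer potential $u_0$ with density $g$, it produces $\frac{\om}{k(\om)}\,\hat g\kl{y,c_0k(\om)}$ where the solution of \eqref{eq:adjoint} requires $\hat g(y,\om)$; already the test case of a density concentrated near a single point $y$ with a smooth bump in time shows $\Mo_\al u_0\neq u$. So the function you construct does not solve \eqref{eq:adjoint}, and the transfer of regularity from $u_0$ through the ``smooth weight'' $m_\al$ proves nothing about the actual solution.

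What is actually needed (and what the paper does) is to take the temporal Fourier transform of \eqref{eq:single-space}: $\hat u(\edot,\om)$ is then a single-layer potential for the Helmholtz equation with complex wavenumber $k(\om)$ and density $\hat g(\edot,\om)\in C^1(\partial\Omout)$, so the classical layer-potential theory (Colton--Kress) gives continuity of $\hat u$ across $\partial\Omout$, one-sided continuous extensions of $\nabla\hat u(\edot,\om)$ to $\overline\Omout$ and $\overline{\Omout_c}$, the jump relation $[\partial_\nu\hat u]=\hat g$, and bounds with a constant $C(\om)$ growing at most polynomially; since $\om\mapsto\snorm{\hat g(\edot,\om)}_{C^1}$ decays faster than any polynomial, the inverse Fourier transform converges after arbitrarily many time differentiations, yielding the stated smoothness and boundary regularity. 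If you insist on a time-domain argument you would have to work directly with $u(x,t)=\int_{\partial\Omout}\bigl(K_\al(x-y,\edot)\ast_t g(y,\edot)\bigr)\kl{t-\abs{x-y}/c_0}\,/\,\kl{4\pi\abs{x-y}}\,\ds(y)$, but establishing the required smoothness and decay of $K_\al$ again forces you into the frequency domain, so no genuine shortcut is gained. (Your uniqueness remark in (v) is fine.)
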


\begin{proof}
Let $u \in \sr'(\R^d \times \R)$ denote the unique distributional solution of~\eqref{eq:w}
with $s(x, t)  = \delta_{\partial\Omout} (x) g(x,t)$, given by \ref{eq:single-space}.
In order to obtain the regularity of $u$ we work in the frequency domain
and employ the theory of single and double layer potentials
for the Helmholtz equation.
For that purpose note that the temporal Fourier transform of
$G_\al$ is given by $\Phi_\al(x,\om) = {e^{\imi k(\om) \abs{\x}}}/{\abs{\x}}$ where $k(\om)  \coloneqq \imi \alpha(\om) + \om/c_0$.
Further, write $\hat u$ and $\hat g$ for the temporal Fourier transform of
$u$ and $g$, respectively. Then
\begin{linenomath} \begin{equation}  \label{eq:single-fouier}
	\forall (x,\om) \in (\R^d\setminus \partial \Omout) \times \R \colon
	\quad
	\hat u (x,\om)  = \int_{\partial\Omout} \Phi_\al(x-y,\om) \, \hat g(y,\om) \, \ds(y) \,,
\end{equation}\end{linenomath}
which is recognized as a single layer potential for the Helmholtz equation with density
$\hat g$. Since $\hat g (\edot , \om) \in C^1(\partial \Omout)$, the
theory of single and double layer potentials (see, for example, \cite{colton2013integral}) shows the following:
\begin{itemize}
\item  $\rest{[\hat u (\edot,\om)]}{\partial\Omout} =0$ and $\rest{[\partial_\nu  \hat u(\edot,\om)]}{\partial\Omout}=\hat g(\edot,\om)$;

\item  $\nabla \hat u(\edot,\om) \in C(\overline \Omout)$ and $\nabla \hat u(\edot,\om) \in C(\overline{\Omout_c})$;

\item
For some function $C(\om)$  that is at most polynomially growing, we have
\begin{linenomath} \begin{equation} \label{eq:helm-aux}
\norm{\rest{\hat u (\edot,\om)}{(\R^d \setminus \partial\Omout)}}_{\infty}+
\norm{\rest{\nabla \hat u(\edot,\om)}{(\R^d \setminus \partial\Omout)}}_\infty
 \leq
C(\om) \snorm{\hat g(\edot,\om)}_{C^1} \,.
\end{equation}\end{linenomath}
\end{itemize}
Here and below the bracket $[v]$ denotes the jump of a function
$v \in  C(\overline{\Omout} \times \R) \cap C(\overline{\Omout_c} \times \R)$
 across the surface $\partial\Omout$ (from inside out).
Next note that $\om \mapsto \snorm{\hat g(\edot,\om)}_{C^1}$ decays faster than any polynomial. Therefore, \eqref{eq:helm-aux} implies that $u$ is infinitely differentiable on $C(\R^d)$ and  that $\rest{\nabla u}{\Omout}$ and $\rest{\nabla u}{\Omout_c}$ are infinitely differentiable on $C(\Omout)$ and $C(\Omout_c)$ (with respect to the time variable). \end{proof}

In the following, we call $u \in C^\infty((\R^d \setminus \partial \Omout )\times \R) $ a classical solution of \eqref{eq:adjoint},  if
$u$ can be  extended  continuously to $\R^d \times \R$,
$\rest{\nabla u}{\Omout}$ and $\rest{\nabla u}{\Omout_c}$ can be continuously extended to $\partial \Omout$, and
\begin{linenomath} \begin{equation}  \label{eq:classical}\left\{
\begin{aligned}
&\kl{ \Do_\al  +
\frac{1}{c_0}\frac{\partial}{\partial t }  }^2  u(x,t)
-
\Delta u(x,t)
= 0
&& \text{ for } (x,t) \in (\R^d \setminus \partial\Omout) \times \R \,,
\\
& \quad [\pd_\nu u](x,t) =  g(x,t),
&&  \text{ for } (x,t) \in \partial\Omout \times \R \,,
 \\
& \quad u (\edot ,t) = 0
&&  \text{ for } t \ll 0  \,.
\end{aligned}
\right. \end{equation}\end{linenomath}
Using Theorem~\ref{thm:regularity}, we one can show the
following existence and uniqueness result  for solutions
of~\eqref{eq:classical}.

\begin{corollary}[Existence and uniqueness of   \eqref{eq:classical}]
Any classical solution of \eqref{eq:classical} is a distributional
solution of \eqref{eq:adjoint}, and vice versa.
 In particular, for any $g \in C_0^\infty (\R \times \partial \Omout)$,
 \eqref{eq:classical} is uniquely solvable.
\end{corollary}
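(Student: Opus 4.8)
The plan is to establish the two implications separately and then read off existence and uniqueness from the already-known existence and uniqueness of the distributional problem \eqref{eq:w}. For the direction ``distributional $\Rightarrow$ classical'', I would start from the distributional solution $u$ of \eqref{eq:adjoint} given by the explicit formula \eqref{eq:single-space}. Theorem~\ref{thm:regularity} already provides all the required regularity: $u \in C^\infty((\R^d\setminus\partial\Omout)\times\R)$, $u$ extends continuously to $\R^d\times\R$, and $\rest{\nabla u}{\Omout}$, $\rest{\nabla u}{\Omout_c}$ extend continuously to $\partial\Omout$. It then remains to verify the three lines of \eqref{eq:classical}. The homogeneous equation on $(\R^d\setminus\partial\Omout)\times\R$ holds because the source $-\delta_{\partial\Omout}(x)\,g(x,t)$ is supported on $\partial\Omout$. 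The jump condition $[\pd_\nu u] = g$ follows by taking the inverse temporal Fourier transform of the single-layer identities $\rest{[\hat u(\edot,\om)]}{\partial\Omout}=0$ and $\rest{[\pd_\nu\hat u(\edot,\om)]}{\partial\Omout}=\hat g(\edot,\om)$ obtained in the proof of Theorem~\ref{thm:regularity}. Finally, $u(\edot,t)=0$ for $t\ll 0$ is inherited from the causality of $G_\al$ and the compact temporal support of $g$.

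For the converse direction ``classical $\Rightarrow$ distributional'', I would take a classical solution $u$ of \eqref{eq:classical} and compute the distributional action of $\kl{\Do_\al + c_0^{-1}\pd_t}^2 - \Delta$ on $u$ against a test function. The operators $\Do_\al$ and $\pd_t$ act only in the time variable, and by Theorem~\ref{thm:regularity} $u$ is smooth in $t$ uniformly up to $\partial\Omout$ from either side; hence these operators commute with the one-sided restriction and jump operations across the spatial surface and contribute no boundary terms. The only distributional correction therefore comes from $\Delta$: since $u$ is continuous across $\partial\Omout$ ($[u]=0$, so no double-layer term appears) and piecewise smooth with continuously extendable gradients, a standard integration by parts over $\Omout$ and $\Omout_c$ yields, in $\sr'(\R^d\times\R)$, that $\Delta u$ equals its pointwise Laplacian off $\partial\Omout$ plus a single-layer distribution on $\partial\Omout$ whose density is the jump $[\pd_\nu u]$. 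Combining this with the pointwise homogeneous equation from the first line of \eqref{eq:classical} leaves precisely the single-layer source on the right-hand side of \eqref{eq:adjoint}, with the jump condition $[\pd_\nu u]=g$ matching it with the sign conventions used there; and $u(\edot,t)=0$ for $t\ll 0$ is the same requirement in both formulations. Hence $u$ solves \eqref{eq:adjoint} distributionally.

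With the equivalence in hand, the ``in particular'' statement is immediate: for any $g\in C_0^\infty(\R\times\partial\Omout)$ the distributional solution exists (formula \eqref{eq:single-space}) and is classical by the first direction, while any two classical solutions are distributional solutions, hence coincide by the uniqueness of the causal distributional solution of \eqref{eq:w} (the generalization of \cite{kowar2010integral} noted earlier). I expect the only delicate point to be the justification of the distributional jump formula for $\Delta u$ in this setting --- namely that $\Delta u$ equals its pointwise part plus exactly $[\pd_\nu u]\,\delta_{\partial\Omout}$ and no higher-order surface distribution, and that the temporal convolution $\Do_\al$ genuinely passes through the spatial surface terms --- but the regularity in Theorem~\ref{thm:regularity} (smoothness in $t$, one-sided continuity of $\nabla u$, and the polynomially bounded estimate \eqref{eq:helm-aux}) is exactly what makes this a routine Green's-identity computation.
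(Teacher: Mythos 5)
Your proposal is correct, and its first half (distributional $\Rightarrow$ classical, via Theorem~\ref{thm:regularity} and the single-layer identities from its proof) is exactly the paper's argument. Where you diverge is the converse direction: you prove that a classical solution of \eqref{eq:classical} is a distributional solution of \eqref{eq:adjoint} by computing the distributional Laplacian directly through the jump relation $\Delta u = \{\Delta u\} + [\pd_\nu u]\,\delta_{\partial\Omout}$ (no double-layer term since $[u]=0$), while the paper instead characterizes distributional solutions by duality: $u$ solves \eqref{eq:adjoint} if and only if $\inner{u}{\phi}$ equals the pairing of $g$ with the solution $\Phi$ of the time-reversed adjoint equation driven by the test function $\phi$ (identity \eqref{eq:ad1}), and then verifies this identity for classical solutions by integration by parts. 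The two routes need the same regularity input, but they distribute the technical burden differently. The paper's duality argument moves all applications of $\Do_\al$ onto the smooth, compactly supported side (via $\Do_\al^*$ acting on $\Phi$ and $\phi$), so it never has to make sense of $\Do_\al$ acting distributionally on $u$ across the interface; your route must justify precisely the point you flag as delicate, namely that the temporal convolution contributes no surface terms and that the Green's-identity jump formula holds with only a single-layer correction --- which is fine given the one-sided continuity of $\nabla u$ and smoothness in $t$ from Theorem~\ref{thm:regularity}, but it is an extra verification. Conversely, your argument makes the role of the jump condition $[\pd_\nu u]=g$ completely transparent, and it isolates the uniqueness step cleanly: you appeal explicitly to uniqueness of the causal distributional solution (the generalization of the argument of Kowar's integral-equation paper noted in Section~2), whereas the paper's ``if and only if'' formulation of \eqref{eq:ad1} silently encodes that same uniqueness. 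Both proofs are sound; yours is somewhat more self-contained at the price of the distributional bookkeeping, the paper's is shorter because it tests against solutions of the adjoint problem rather than against arbitrary test functions.
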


\begin{proof}
According to Theorem~\ref{thm:regularity} and its
proof, any solution of~\eqref{eq:adjoint}  is a solution of
\eqref{eq:classical}. Conversely,  note that  $u$ is a distributional solution of~\eqref{eq:adjoint} if and only
\begin{linenomath} \begin{equation}  \label{eq:ad1}
	\forall
	\phi \in C_0^\infty(\R^d \times \R)
\colon
\quad
\inner{u}{\phi} = \int_{\R} \int_{\partial\Omout} g(x,t) \Phi (x,t) \, \ds(x) \, dt
	\,,
\end{equation}\end{linenomath}
where  $\Phi$ is the solution of  $\bigl( \Do_\al^*  -
\tfrac{1}{c_0} \tfrac{\partial}{\partial t }  \bigr)^2  \Phi
- \Delta \Phi
=  \phi$ and $\Phi (\edot ,t) = 0 $ for $t > T$.
Using integration by parts one  verifies that any
solution of \eqref{eq:classical}  satisfies
\eqref{eq:ad1} and therefore  also~\eqref{eq:adjoint}.
\end{proof}

\subsection{Proof of Theorem~\ref{thm:adjointw1}}

Let $h \in C^\infty_0(\Omout)$
and let $p_\al$ be the solution of the attenuated wave
equation~\eqref{eq:wavealpha}. Multiplication of \eqref{eq:wavealpha}
with a test function $\psi \in C_0^\infty(\R^d  \times \R) $ and integrating by parts shows
\begin{linenomath} \begin{multline} \label{E:Var}
\int_{\R} \int_{\R^d}  \ekl{ \kl{ \Do_\al  -
\frac{1}{c_0}\frac{\partial}{\partial t }  }^2  p_\al(x,t) }\, \psi(x,t) \,\rmd x \, \rmd t \\
- \int_{\R} \int_{\R^d} p_\al(x,t) \ekl{  \Delta \psi(x,t)}  \,\rmd x \, \rmd t  =
-
\int_{\R^d}  h(x) \, \frac{ \partial \psi}{\partial t}(x,0) \, \rmd x  \,. \end{multline}\end{linenomath}
Now suppose $g \in C_0^\infty(\Gamma \times (0,\tmax) )$ and let $q_\al \in C^\infty(\R^d \times \R)$ be the solution of the adjoint attenuated wave equation~\eqref{eq:attenuated1}. Because $p_\al$ vanishes for $t<0$ and $q_\al$ vanishes for $T>0$ ,  identity~\eqref{E:Var}
also holds for $\psi = q_\al$. This  gives
\begin{linenomath} \begin{multline} \label{E:Iden}
\int_{\R} \int_{\R^d}  \ekl{ \kl{ \Do_\al  -
\frac{1}{c_0}\frac{\partial}{\partial t }  }^2  p_\al(x,t) }\, q_\al(x,t) \,\rmd x \, \rmd t  \\
-  \int_{\R} \int_{\R^d} p_\al(x,t) \ekl{  \Delta q_\al (x,t)}  \,\rmd x \, \rmd t
= -
\int_{\R^d}  h(x) \, \frac{ \partial q_\al}{\partial t}(x,0) \, \rmd x  \,.
\end{multline}
\end{linenomath}
Using  two times integration by parts in the  first term in \eqref{E:Iden}
with respect to $t$, using the definition of $\Do_\al^*$,
and recalling that $q_\al$ solves the adjoint attenuated wave equation
\eqref{eq:attenuated1}   shows
\begin{linenomath}
\begin{multline*}
-\int_{\R^d}  h(x) \, \frac{ \partial q_\al}{\partial t}(x,0) \, \rmd x
=
 \int_{\R} \int_{\R^d}  p_\al(x,t)  \, \ekl{ \kl{ \Do_\al^*  +
\frac{1}{c_0}\frac{\partial}{\partial t }  }^2 q_\al(x,t) - \Delta q_\al(x,t)   }\,\rmd x \, \rmd t \\
= - \int_{\R} \int_{\Gamma}  p_\al(y,t)  g_\al(y,t)\,\ds (y) \, \rmd t \,.\end{multline*}
\end{linenomath}
Consequently, for every $g \in C_0^\infty(\Gamma \times (0, \infty))$, it holds that
\begin{linenomath}
\begin{equation*}
\forall h\in C_0^\infty(\Omout) \colon \quad
 \int_0^\tmax \int_{\Gamma}  (\Wo_\al h)(y,t)  \,  g(y,t) \, \ds(y)
\, \rmd t
=
 \int_{\R^d}  h(x) \, \frac{ \partial q_\al}{\partial t}(x,0) \, \rmd x
 \,.
\end{equation*}\end{linenomath}
As the last identity holds on a  dense subset of
$L^2(\Omout)$, this shows the expression \eqref{eq:attenuated1}, \eqref{eq:adjointw1} for $\Wo_\al^*  g$ and concludes
the  proof of Theorem~\ref{thm:adjointw1}.

\end{document}